\documentclass[11pt]{amsart}
\usepackage{amssymb,amscd,amsxtra,calc}
\usepackage{cmmib57}
\usepackage{graphicx}
\usepackage{amssymb,amsmath,amsfonts,mathrsfs,enumerate,xcolor}
\addtolength{\oddsidemargin}{-.5in}
\addtolength{\evensidemargin}{-.5in}
\addtolength{\textwidth}{1.0in} 
\usepackage{amsthm}

\usepackage{soul}
\usepackage[citecolor=Navy]{hyperref}
\usepackage[ruled,vlined]{algorithm2e}

\usepackage{lipsum}
\usepackage{amsfonts}
\usepackage{graphicx}
\usepackage{epstopdf}
\usepackage{algorithmic}
\usepackage{float}
\restylefloat{table}
\usepackage{placeins}
\usepackage{array}
\ifpdf
  \DeclareGraphicsExtensions{.eps,.pdf,.png,.jpg}
\else
  \DeclareGraphicsExtensions{.eps}
\fi


\usepackage{subcaption}

\setlength{\topmargin}{0cm}
\setlength{\oddsidemargin}{0cm}
\setlength{\evensidemargin}{0cm}
\setlength{\marginparwidth}{0cm}
\setlength{\marginparsep}{0cm}

\setlength{\textheight}{\paperheight - 2in -35pt}
\setlength{\textwidth}{\paperwidth - 2in}
\setlength{\headheight}{12.5pt}
\setlength{\headsep}{25pt}
\setlength{\footskip}{30pt}

\pagestyle{headings}

\theoremstyle{plain}
    \newtheorem{thm}{Theorem}[section]

    \newtheorem{lemma}[thm]{Lemma}

    \newtheorem{theorem}[thm]{Theorem}

\theoremstyle{definition}

    \newtheorem{remark}[thm]{Remark}
\theoremstyle{remark}

\newcommand{\authorfootnotes}{\renewcommand\thefootnote{\@fnsymbol\c@footnote}}

\usepackage{caption} 
\captionsetup[table]{skip=10pt}

 \title[A good algorithm for optimization and solving systems of equations]{Backtracking New Q-Newton's method: a good algorithm for optimization and solving systems of equations}
 \author{Tuyen Trung Truong}
   \address{Department of Mathematics, University of Oslo, Blindern 0851 Oslo, Norway}
  \email{tuyentt@math.uio.no}
    \date{\today}
    \keywords{Backtracking line search, Convergence guarantee, Optimization, Newton's method, Random dynamical systems, Rate of convergence, Saddle points}
   \subjclass[2010]{}

\begin{document}
\maketitle
{\centering\footnotesize To Professor Eric Bedford, who guided me to Dynamical Systems and the use of computers in research.\par}

\begin{abstract}

In this paper, by combining the algorithm New Q-Newton's method - developed in previous joint work of the author - with Armijo's Backtracking line search, we resolve convergence issues encountered by Newton's method (e.g. convergence to a saddle point or having attracting cycles of more than 1 point) while retaining the quick rate of convergence for Newton's method. We also develop a family of such methods, for general second order methods, some of them having the favour of quasi-Newton's methods. The developed algorithms  are very easy to implement. From a Dynamical Systems' viewpoint, the new iterative method has an interesting feature: while it is deterministic, its dependence on Armijo's Backtracking line search makes its behave like a random process, and thus helps it to have good performance. On the experimental aspect, we compare the performance of our algorithms with well known variations of Newton's method on some systems of equations (both real and complex variables). We also explore some basins of attraction arising from Backtracking New Q-Newton's method, which seem to be quite regular and do not have the fractal structures as observed for the standard Newton's method. Basins of attraction for Backtracking Gradient Descent seem not be that regular. 

\end{abstract}

\section{Introduction} Let $f:\mathbb{R}^m\rightarrow \mathbb{R}$ be a $C^2$ function, with gradient $\nabla f(x)$ and Hessian $\nabla ^2f(x)$.   Newton's method $x_{n+1}=x_n-(\nabla ^2f(x_n))^{-1}\nabla f(x_n)$ (if the Hessian is {\bf invertible}) is a very popular iterative method for solving systems of equations and for optimization in general. It seems that every month there is at least one paper about this subject appears on arXiv. One attractive feature of this method is that if it {\bf converges} then it usually converges very fast, with the rate of convergence being quadratic, which is generally faster than that of Gradient descent (GD) methods.  We recall that if $\{x_n\}\subset \mathbb{R}^m$  converges to $x_{\infty}$, and so that $||x_{n+1}-x_{\infty}||=O(||x_n-x_{\infty}||^{\epsilon})$, then $\epsilon$ is the rate of convergence of the given sequence. If $\epsilon =1$ then we have linear rate of convergence, while if $\epsilon=2$ then we have quadratic rate of convergence.  

However, there is no guarantee that Newton's method will converge, and it is problematic near points where the Hessian is not invertible. Moreover, it cannot avoid saddle points. Recall that a {\bf saddle point} is a point $x^*$ which is a non-degenerate critical point of $f$ (that is $\nabla f(x^*)=0$ and $\nabla ^2f(x^*)$ is invertible) so that the Hessian has at least one {\bf negative eigenvalue}. (Note that this definition allows also local maxima.) Saddle points are problematic in large scale optimization (such as those appearing in Deep Neural Networks, for which the dimensions could easily be millions or billions), see \cite{bray-dean, dauphin-pascanu-gulcehre-cho-ganguli-bengjo}. 

Another serious issue with Newton's method is the existence of attracting cycles of non-critical points, even for simple cost functions. For example, consider the simple polynomial of degree $4$, $f(x)=(x^2-1)(x^2+A)$. For some special values of $A$ (such as $A=(29-\sqrt{720})/11$), Newton's method applied to find roots of $f$ will have an attracting $2$-cycle, none of them is a critical point of $||f||^2$. For polynomials $P(z)$ in 1 complex variable $z$, it is recently shown in \cite{sumi} that random Damped Newton's method $x_{n+1}=x_n-\delta _nP(x_n)/P'(x_n)$, where $\delta _n$ is a random complex number and where the initial point $x_0$ is randomly chosen, will converge to a root of $P(z)$. It is, however, unknown how good random Damped Newton's method is for non-polynomial cost functions and in higher dimensions, and the extensive experiments performed by the author and coauthors in \cite{truong-etal} illustrates that it does not fare well in the general setting. 

Newton's method, and other iterative methods in solving systems of equations and optimization, can be studied in the general setting of discrete Dynamical Systems. There are many modifications of Newton's method. The intended readers can see an overview in \cite{truong-etal}.  Among them, let's mention only the two most relevant to this paper: Levenberg-Marquardt method and Regularized Newton method. 

A well known variant of Newton's method is Levenberg-Marquardt algorithm \cite{levenberg}\cite{marquardt}, very popular for using in the least square fit problem where the cost function $f$ is a sum of squares of real functions: $f(x)=\frac{1}{2}[f_1(x)^2+\ldots +f_N(x)^2]$. Let $F(x)=(f_1(x),\ldots ,f_N(x))$, and $JF(x)$ the Jacobian of $F$. Then the update rule for Levenberg-Marquardt algorithm is: 
\begin{equation}
z_{n+1}=z_n-[JF(z_n)^{T}JF(z_n)+\lambda _nId]^{-1}JF(z_n)^{T}.F(z_n),
\label{EquationLM}\end{equation}
where $A^T$ is  the transpose of a matrix $A$, $\lambda _n>0$ is an appropriate constant. A usual choice for $\lambda _n$ is $\lambda _n=c||F(z_n)||^{\tau}$ for constants $c,\tau  >0$. (One can also interpolate $\lambda _n$ between $||F(z_n)||^{\tau _1}$ and $||JF(z_n)^{T}.F(z_n)||^{\tau _2}$, see \cite{ahookhosh-etal}.)   If in (\ref{EquationLM}) one chooses $\lambda _n=0$, then one obtains the classical Newton's method applied directly to solve the system of equations $F=0$. Hence, Levenberg-Marquardt algorithm can  be viewed as a correction of Newton's method for solving systems of equations, treating the case $JF(z_n)$ not invertible.  

For a general cost function $f$, an analog (or extension) of Levenberg-Marquardt algorithm is the so called Regularized Newton method \cite{ueda-yamashita1}\cite{ueda-yamashita2}\cite{shen-etal}. It replaces the $\nabla ^2f(z_n)$ in Newton's method by $\nabla ^2f(z_n)+[c_1\max \{0,-\lambda _{min}(\nabla ^2f(z_n))\}+\lambda _n]Id$, where $c_1>1$ is a constant, $\lambda _{min}(A)$ is the smallest eigenvalue of a real symmetric matrix $A$, and $\lambda _n>0$ is appropriately chosen.  A usual choice for $\lambda _n$ here is $c_2||\nabla f(z_n)||^{\tau}$ for constants $c_2,\tau >0$.   

Research on these algorithms concentrate only on local convergence (and rate of convergence) near local minima, without much addressing about the global convergence (that is, research on if cluster points of the constructed sequence are all critical points of the cost function, and if the constructed sequence actually converges) or saddle point avoidance. The design of these methods can be a reason for the fact that no result on avoidance of saddle points is available for them (for a more detailed analysis of this issue, see Section 3). In the case of Levenberg-Marquardt method, this can be explained by the fact that the matrix $JF(z_n)^{T}JF(z_n)$ is not the whole of the Hessian of the function $f(x)=\frac{1}{2}||F(x)||^2$, and hence may not be able to control near saddle points of $f(x)$. In the case of Regularized Newton's method, there are  two factors which may contribute to the difficulty when dealing theoretically with this method.  First, the update rule in Regularized Newton's method may not be a $C^1$ map near saddle points where $\lambda _{min}(\nabla ^2f)$ has multiplicity $>1$. Second, the choice of the parameters in this method is not flexible enough. Also, on various examples, these two methods converge only when one adds a line search (e.g. Armijo's Backtracking line search) to them. 

Indeed, the above analysis also applies to many other well known variants of Newton's method in the literature. As far as we know, currently there is no variant which has theoretical guarantees for both convergence and avoidance of saddle points. For a comprehensive test of several variants of Newton's method, on various test problems, the readers can see \cite{truong-etal}. In Section 4 in this paper, we also present some new experiments concerning solving systems of equations. 

For example, these experiments show that the well known Cubic Regularization - and its adaptive version, see \cite{nesterov-polyak}\cite{cartis-etal} - does not fare well, even though it has some good theoretical guarantees. The theoretical guarantees of (adaptive) Cubic Regularization are not stronger than that of Backtracking New Q-Newton's method, even though (adaptive) Cubic Regularization  requires stronger assumptions on the cost functions. For example, in \cite{bianconcini-sciandrone} where line search is integrated into Adaptive Cubic Regularization, whose results more or less represent the current strongest theoretical guarantees of variants of Adaptive Cubic Regularization, the following are proven: i) Assume that for the sequence $\{x_n\}$ constructed  the sequence $||\nabla ^2f(x_n)||$  is uniformly bounded, then we have $\liminf _{n\rightarrow\infty}||\nabla f(x_n)||=0$; ii) If one assumes further that the sequences $f(x_n)$ and  $\nabla f(x_n)$ is uniformly continuous on the sequence $x_n$, then $\lim _{n\rightarrow\infty}\nabla f(x_n)=0$. These conclusions are much weaker than that in our main theorems above. Moreover, unlike many other methods, the performance of installations of Adaptive Cubic Regularization  seems very sensitive to the choice of its  parameters. One probable reason for this is the fact that the algorithm relies on a suboptimal problem which  is quite complicatedly depending on the parameters in order to have good theoretical guarantees. See \cite{ARCGitHub} for a discussion on how difficult is is to actually implement this method, and also for the only publicly available code for Adaptive Cubic Regularization. It is noteworthy to mention that, as observed in \cite{nesterov-polyak}, to precisely solve the optimal subproblem in each step of Cubic Regularization, one actually needs to compute the eigenvalues and eigenvectors of the Hessian matrix, and hence the true complexity of this method is about the same that of our algorithm Backtracking New Q-Newton's method; while as discussed before our method has better theoretical guarantees, better experimental performances and is more flexible. 

This paper shows that a modification (Backtracking New Q-Newton's method) of a very new version of Newton's method, so-called New Q-Newton's method, introduced by the author and collaborators in \cite{truong-etal}, resolves these issues. The method is also straight forward to be implemented from the pseudo-code, and very flexible with respect to its parameters.  For the sake of comprehension,  here we describe the essence of these new algorithms, and present the details of the algorithms in the next section. 

We start with New Q-Newton's method. The main idea is that if we fix $m+1$ distinct numbers $\delta _0,\ldots ,\delta _{m}$, and fix $\tau >0$,  then for every $x$ so that $\nabla f(x)\not= 0$, then there is one $j$ so that $\nabla ^2f(x)+\delta _j ||\nabla f(x)||^{\tau }Id$ is invertible. We choose such a $j$, and change the negative eigenvalues of the matrix to their negative, before taking its inverse in the update rule. If $\delta _0,\ldots ,\delta _m$ are chosen randomly, then  \cite{truong-etal, truongnew} show that New Q-Newton's method avoids saddle points. Moreover, near non-degenerate local minima, New Q-Newton's method has quadratic rate of convergence. However, it is unknown whether New Q-Newton's method has good convergence guarantees. 

In this paper, we observes that the matrix in New Q-Newton's method is semi-positive and hence one can incorporate Armijo's Backtracking line search \cite{armijo} into it. We call the new algorithm Backtracking New Q-Newton's method. It preserves the good properties of New Q-Newton's method, while also resolves the convergence issue.  Moreover, a family of such modifications can be introduced, some of them having the flavours of quasi-Newton's methods, with strong theoretical guarantees. These more general modifications will be presented in the last section of this paper. We conclude this introduction with some main properties of Backtracking New Q-Newton's method.  

We recall that a function $f$ is Morse if all of its critical points are non-degenerate (i.e. if $\nabla f(x_0)=0$, then $\nabla ^2f(x_0)$ is invertible). By transversality results, Morse functions are dense in the set of functions. 

\begin{theorem}
Let $f:\mathbb{R}^m\rightarrow \mathbb{R}$ be a $C^3$ function. Let $\{x_n\}$ be a sequence constructed by the Backtracking New Q-Newton's method, where $0<\tau <1$. 

1) (Descent property) $f(x_{n+1})\leq f(x_n)$ for all n. 

2) If $x_{\infty}$ is a {\bf cluster point} of $\{x_n\}$, then $\nabla f(x_{\infty})=0$. That is, $x_{\infty}$ is a {\bf critical point} of $f$.

3) If $f$ is a Morse function, then either $\lim _{n\rightarrow\infty}||x_n||=\infty$ or $\{x_n\}$ converges to a critical point of $f$. In the latter case, if the initial point $x_0$ is randomly chosen, then the limit point must be a local minimum. 

4) There is a set $\mathcal{A}\subset \mathbb{R}^m$ of Lebesgue measure $0$, so that if $x_0\notin \mathcal{A}$, and $x_n$ converges to $x_{\infty}$, then $x_{\infty}$ cannot be  a {\bf saddle point} of $f$. Hence, if $\nabla ^2f(x_{\infty})$ is invertible, then $x_{\infty}$ must be a local minimum. 

5) If $x_n$ converges to $x_{\infty}$ which is a non-degenerate critical point of $f$, then the rate of convergence is at least {\bf linear}.  

If moreover, $x_{\infty}$ is a local minimum, then the rate of convergence is quadratic.  

6) If $x_{\infty}'$ is a non-degenerate local minimum of $f$, then for initial points $x_0'$ close enough to $x_{\infty}'$, the sequence $\{x_n'\}$  constructed by the Backtracking New Q-Newton's method  will converge to $x_{\infty}'$. 

\label{Theorem1}\end{theorem}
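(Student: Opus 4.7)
The plan is to handle the six parts in order, bootstrapping (3) and the saddle-avoidance (4) from the descent and cluster-point statements (1)--(2), and obtaining (5)--(6) by linearizing the iteration map near a non-degenerate critical point. For (1), the matrix $A_n$ used at step $n$ --- the perturbed Hessian $\nabla^2 f(x_n)+\delta_{j_n}\|\nabla f(x_n)\|^\tau Id$ with its negative eigenvalues replaced by their absolute values --- is positive definite, so the search direction $v_n:=A_n^{-1}\nabla f(x_n)$ satisfies $\langle \nabla f(x_n),v_n\rangle>0$ and Armijo's rule delivers $f(x_{n+1})\le f(x_n)$. For (2), if $x_{n_k}\to x_\infty$, then monotonicity plus continuity force $f(x_n)-f(x_{n+1})\to 0$; Armijo's inequality $f(x_n)-f(x_{n+1})\ge \alpha\gamma_n\langle\nabla f(x_n),v_n\rangle$ then yields $\gamma_{n_k}\langle\nabla f(x_{n_k}),v_{n_k}\rangle\to 0$. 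Supposing $\nabla f(x_\infty)\ne 0$, on a small closed neighborhood of $x_\infty$ the quantities $\|A_n^{-1}\|$ and $\|\nabla f(x_n)\|$ are bounded above and below, and a second-order Taylor estimate (using $f\in C^3$) produces a uniform lower bound on $\gamma_n$, a contradiction.

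For (3), I would upgrade (2) to convergence of the whole sequence using the Morse hypothesis. The critical points of a Morse $f$ are isolated, so by (2) every cluster point of $\{x_n\}$ is an isolated critical point. A standard topological argument --- if $\{x_n\}$ had two distinct cluster points the orbit would cross spheres disjoint from the critical set infinitely often, yielding a further cluster point on such a sphere and contradicting (2) --- then forces the dichotomy that either $\|x_n\|\to\infty$ or the full sequence converges to a single critical point. The random-$x_0$ statement follows at once from this dichotomy together with (4) below and the Morse hypothesis.

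Part (4) is the main obstacle. Following the approach in \cite{truong-etal,truongnew} for New Q-Newton's method, let $F(x)=x-\gamma(x)A(x)^{-1}\nabla f(x)$. Near a non-degenerate critical point $x^*$ one checks that for generic $\delta_0,\ldots,\delta_m$ the perturbed Hessian has distinct nonzero eigenvalues on a punctured neighborhood and the index selection stabilizes, so $x\mapsto A(x)$ is locally $C^1$; since Armijo eventually returns $\gamma\equiv 1$, the map $F$ is locally $C^1$ there. At a saddle $x^*$ with Hessian $H=PDP^T$, one computes $DF(x^*)=Id-|H|^{-1}H=P(Id-|D|^{-1}D)P^T$, which is $0$ on the positive-eigenvalue eigenspace of $H$ and $2\,Id$ on the negative-eigenvalue eigenspace. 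Hence $DF(x^*)$ has an eigenvalue of modulus $2>1$, and the Stable--Center Manifold Theorem produces a local $C^1$ stable manifold of positive codimension at $x^*$. The set $\mathcal{A}$ of $x_0$ whose forward orbit eventually enters one of these stable manifolds is a countable union of smooth pullbacks of positive-codimension sets, hence has Lebesgue measure zero. The delicate piece I expect to wrestle with is verifying the $C^1$ regularity of $F$ (including the Armijo step size as a function of $x$) uniformly in a deleted neighborhood of every saddle, which is where the hypothesis $0<\tau<1$ and the distinctness of the $\delta_j$ are essential.

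For (5) and (6), work near a non-degenerate critical point $x_\infty$. Once $x_n$ is close enough, $\|\nabla f(x_n)\|^\tau$ is small and a $C^3$ Taylor estimate shows the trial step $\gamma_n=1$ eventually satisfies Armijo's condition, so the iteration reduces to $x_{n+1}=x_n-A_n^{-1}\nabla f(x_n)$. At a local minimum all Hessian eigenvalues are positive, the sign-flipping is inactive, $A_n$ tends to the Hessian at $x_\infty$, and the classical analysis of Newton's method (using $C^3$ to control Hessian variation) yields quadratic convergence. For a general non-degenerate critical point, convergence forces $x_n-x_\infty$ to lie asymptotically in the local stable manifold of $F$ at $x_\infty$, where the linearization from (4) has spectral radius strictly less than $1$, so the rate is at least linear. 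Part (6) is the same local analysis in reverse: near a non-degenerate local minimum $x'_\infty$, the iteration is a small $C^1$ perturbation of Newton's method, and the standard local-convergence argument for Newton's method produces a trapping neighborhood on which $x'_n\to x'_\infty$.
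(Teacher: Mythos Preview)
Your overall strategy matches the paper's, but two places deserve correction.

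For part (3) the paper proceeds differently: it passes to the real projective space $\mathbb{P}^m$, shows $d(x_n,x_{n+1})\to 0$ in the projective metric by treating the bounded and unbounded subsequential cases separately, and then invokes a connectedness-of-cluster-set argument from \cite{truong-nguyen2} together with the countability of the Morse critical set. Your sphere-crossing argument is viable, but as written it is incomplete: to conclude that crossing an annulus infinitely often produces a cluster point \emph{in} that annulus you need $\|x_{n+1}-x_n\|\to 0$ along bounded subsequences. This does follow (near a nondegenerate critical point $minsp(A_n)$ stays bounded below, so $\|w_n\|\lesssim\|\nabla f(x_n)\|\to 0$; alternatively use that part (2) actually gives $\nabla f(x_{n_k})\to 0$ along every bounded subsequence), but you should make that step explicit. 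The paper's projective-metric route buys a uniform treatment of the case where some subsequence escapes to infinity.

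For part (4), two corrections. First, the backtracking search starts at $\gamma_0\in(0,1)$, not at $1$; so near a saddle the step size stabilizes to $\gamma_0$ and $DF(x^*)=Id-\gamma_0|H|^{-1}H$ has eigenvalues $1\pm\gamma_0$, not $0$ and $2$ (the unstable conclusion survives). Second, and more substantively, the claim ``Armijo eventually returns $\gamma\equiv\gamma_0$'' is the crux, and you assert it without argument. The paper's proof is a direct Taylor computation: expanding $\nabla f(x)$ in the eigenbasis of $A(x)$ one checks $\tfrac12\langle A(x)w(x),w(x)\rangle\le\tfrac12\langle w(x),\nabla f(x)\rangle$, and since $A(x)=\nabla^2 f(x)+o(1)$ near the saddle this gives $f(x-w(x))-f(x)\le-\tfrac12\langle w(x),\nabla f(x)\rangle+o(\|w(x)\|^2)$, so the very first Armijo trial is accepted throughout a neighborhood. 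Once this is in hand, $\gamma$ is literally constant and your worry about its $C^1$ regularity dissolves; the only remaining smoothness issue is for $x\mapsto pr_{A(x),\pm}$, which the paper handles via an integral representation valid for all $\tau>0$. Neither the hypothesis $\tau<1$ nor ``distinct eigenvalues of the perturbed Hessian'' is what makes this step go through---near a nondegenerate critical point the index $j$ freezes at $0$ simply because $\|\nabla f\|^\tau$ is small relative to $minsp(\nabla^2 f)$.
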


Note that the condition for a function to be  Morse is a generic condition. Hence, for a generic function, and for a randomly chosen initial point $x_0$, our algorithm Backtracking New Q-Newton's method either converges to a local minimum or diverges to infinity.  Another important class of functions, which included real analytic functions, has many realistic applications (such as in Deep Learning). The next main theorem describes convergent guarantees of Backtracking New Q-Newton's method for this class of functions. To this end, we first recall about Lojasiewicz gradient inequality and Lojasiewicz exponent.   

{\bf Lojasiewicz gradient inequality.} A function $f$ satisfies Lojasiewicz gradient inequality at a point $x^*$ if there is a small neighbourhood $U$ of $x^*$, a constant $0<\mu <1$ and a constant $C>0$ so that for all $x,y\in U$ we have
\begin{eqnarray*}
|f(x)-f(y)|^{\mu}\leq C||\nabla f(x)||. 
\end{eqnarray*}

We will need the following quantity in statements of the next results:

{\bf Definition (Lojasiewicz exponent):} Assume that $f:\mathbb{R}^m\rightarrow \mathbb{R}$ has the Lojasiewicz gradient inequality near its critical points. Then at each critical point $x^*$ of $f$, we define

$\mu (x^*):=\inf \{\mu : $ there is an open neighbourhood $U$ of $x^*$ and a constant $C>0$ so that for all $x,y\in U$ we have $|f(x)-f(y)|^\mu \leq C||\nabla f(x)||\}$.  

We say that the gradient $\nabla f$ satisfies Lojasiewicz gradient inequality at a point $x^*$, if  the function $F(x,y)=<\nabla f(x),y>$ $:\mathbb{R}^{2m}\rightarrow \mathbb{R}$ satisfies Lojasiewicz gradient inequality. By Lojasiewicz' theorem, if $f$ is real analytic (and hence $F(x,y)$ is also real analytic), then $f$ and its gradient satisfy Lojasiewicz gradient inequality. Hence, the next theorem can be applied to {\bf quickly} finding roots of systems of (real or complex) analytic equations. Part 3 of its in particularly generalises a result in \cite{truong2021}, which treats the case of finding roots of univariate meromorphic functions (i.e. meromorphic functions in 1 variable). 

\begin{theorem} Assume that $f:\mathbb{R}^m\rightarrow \mathbb{R}$ satisfies the Lojasiewicz gradient inequality. Let $\{x_n\}$ be a sequence constructed by New Q-Newton's Backtracking G. Assume also that $0<\tau \leq 1$. 

1) Assume that $f$ has at most countably many critical points, and $\nabla f$ satisfies the Lojasiewicz gradient inequality. Then either $\lim _{n\rightarrow\infty}||x_n||=\infty$ or $\{x_n\}$ converges to a critical point of $f$. 

2) Assume that for all critical points $x^*$ of $f$, we have $ \mu (x^*)\times (1+\tau ) <1$. For $\tau =1$, assume  moreover that $\nabla f$ also satisfies the Lojasiewicz gradient inequality. Then either $\lim _{n\rightarrow\infty}||x_n||=\infty$ or $\{x_n\}$ converges to a critical point of $f$. 

\label{Theorem2}\end{theorem}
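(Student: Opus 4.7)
The plan is to follow the standard Lojasiewicz--Simon convergence scheme, adapted to the Backtracking New Q-Newton's method. By Theorem 1 parts 1--2, $f(x_n)$ is monotone non-increasing and every cluster point of $\{x_n\}$ is a critical point of $f$. Assuming $\{x_n\}$ does not escape to infinity, it therefore suffices, once the trajectory enters a small neighborhood of a cluster point $x^*$, to prove the summability
\begin{equation*}
\sum_n \|x_{n+1}-x_n\| < \infty,
\end{equation*}
which is enough to force $\{x_n\}$ to converge to $x^*$. (Under the countability hypothesis of part 1, this also rules out the trajectory oscillating among several critical points at the same $f$-value, since the cluster set is connected in a countable set, hence a single point.)

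The backbone is a descent-gradient-step inequality. Write the update as $x_{n+1}=x_n-\gamma_n w_n$, where $w_n = A_n^{-1}\nabla f(x_n)$ and $A_n$ is the symmetric positive-definite matrix produced by the New Q-Newton construction from $\nabla^2 f(x_n)+\delta_{j(n)}\|\nabla f(x_n)\|^{\tau}\,\mathrm{Id}$ by flipping the signs of negative eigenvalues. The regularizer immediately gives $\lambda_{\min}(A_n)\ge c_1\|\nabla f(x_n)\|^{\tau}$, hence $\|w_n\|\le c_2\|\nabla f(x_n)\|^{1-\tau}$ and $\langle\nabla f(x_n),w_n\rangle\ge c_1\|w_n\|^2$. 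The Armijo condition yields $f(x_n)-f(x_{n+1})\ge \alpha \gamma_n\langle\nabla f(x_n),w_n\rangle$, and using the $C^3$-regularity of $f$ to bound $\|\nabla^2 f\|$ uniformly on a bounded neighborhood of $x^*$, backtracking terminates with $\gamma_n$ bounded below in terms of $\|A_n\|^{-1}$, producing
\begin{equation*}
f(x_n)-f(x_{n+1}) \ge c_3\,\|x_{n+1}-x_n\|\cdot\|\nabla f(x_n)\|^{1+\tau}
\end{equation*}
valid on the tail inside a neighborhood of $x^*$.

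For part 2, apply the Lojasiewicz inequality $\|\nabla f(x_n)\|\ge c_4|f(x_n)-f(x^*)|^{\mu}$ and introduce the energy $E_n = (f(x_n)-f(x^*))^{1-\mu(1+\tau)}$. A standard concavity calculation (legitimate precisely because $\mu(1+\tau)<1$) converts the displayed estimate into a telescoping inequality $E_n - E_{n+1} \ge c_5\|x_{n+1}-x_n\|$. Summing yields $\sum\|x_{n+1}-x_n\|\le c_5^{-1}E_{n_0}<\infty$. In the borderline case $\tau=1$, the exponent bookkeeping forces $\mu$ small and the extra Lojasiewicz hypothesis on $\nabla f$ is used to strengthen the lower bound on $\gamma_n$ (preventing it from collapsing faster than permitted by the energy estimate). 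For part 1, no quantitative relation between $\mu$ and $\tau$ is assumed; instead I would use the Lojasiewicz inequality for $\nabla f$ (which controls how $w_n$ itself behaves along the flow) to run a Kurdyka--Lojasiewicz variant of the argument without tracking exact exponents, obtain summability on the tail, and then conclude convergence via the countability of critical points.

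The main obstacle is the uniform lower bound on the backtracking step size $\gamma_n$ near a degenerate critical point, where $A_n$ may become nearly singular in some directions and $\|w_n\|$ may be much larger than $\|\nabla f(x_n)\|$. Away from a non-degenerate critical point one eventually has $\gamma_n=1$ by the quadratic model, but at a degenerate critical point -- precisely the regime where Lojasiewicz is actually needed -- the crude bound $\gamma_n\gtrsim \|\nabla f(x_n)\|^{-\tau}\|A_n\|^{-1}$ is too weak to drive the telescoping sum alone, and the refined estimate must be extracted from the interplay between the Armijo condition, the Lipschitz bound for $\nabla f$, and the eigenvalue structure of $A_n$. Making these estimates uniform along the whole tail, particularly in the $\tau=1$ borderline case of part 2, is the delicate technical step.
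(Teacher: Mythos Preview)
Your Part 2 outline is essentially the paper's argument, but you have misidentified the difficulty. The key inequality
\[
f(x_n)-f(x_{n+1}) \ \ge\ c_3\,\|x_{n+1}-x_n\|\cdot\|\nabla f(x_n)\|^{1+\tau}
\]
does \emph{not} require any lower bound on $\gamma_n$: from Armijo one has $f(x_n)-f(x_{n+1})\ge \tfrac{\gamma_n}{3}\langle \widehat{w_n},\nabla f(x_n)\rangle$, while $\|x_{n+1}-x_n\|=\gamma_n\|\widehat{w_n}\|$, so $\gamma_n$ cancels. What is actually needed is the inequality $\langle w_n,\nabla f(x_n)\rangle\ge C\,\|w_n\|\cdot\|\nabla f(x_n)\|^{1+\tau}$, which follows immediately from $sp(A_n)$ bounded and $minsp(A_n)\ge\kappa\|\nabla f(x_n)\|^{\tau}$ (Lemma 3.2). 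Your telescoping with $E_n=(f(x_n)-f(x^*))^{1-\mu(1+\tau)}$ is then exactly how the paper proceeds (via the reference to \cite{truong-etal}).

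Where your proposal has a genuine gap is in the role of the Lojasiewicz hypothesis on $\nabla f$. You say it is used ``to strengthen the lower bound on $\gamma_n$''; this is not what it does. Its purpose is to force $\|w_{n_k}\|\to 0$ when $\tau=1$. For $\tau<1$ one has $\|w_n\|\le\kappa^{-1}\|\nabla f(x_n)\|^{1-\tau}\to 0$ for free, but at $\tau=1$ this only gives $\|w_n\|$ bounded, which is not enough to keep the trajectory inside the Lojasiewicz neighborhood. The paper's device is to apply the Lojasiewicz inequality to the auxiliary function $F(x,y)=\langle\nabla f(x),y\rangle$ on $\mathbb{R}^{2m}$; substituting $y=e_{j,k}$ (eigenvectors of $A_k$) yields
\[
|\langle\nabla f(x_{n_k}),e_{j,k}\rangle|^{\mu}\ \le\ C\bigl(\|\nabla^2 f(x_{n_k})e_{j,k}\|+\|\nabla f(x_{n_k})\|\bigr)\ \le\ C'\,\|A_k e_{j,k}\|=C'|\lambda_{j,k}|,
\]
so each coordinate of $w_{n_k}$ satisfies $|a_{j,k}|/|\lambda_{j,k}|\le C'|a_{j,k}|^{1-\mu}$, giving $\|w_{n_k}\|\le C''\|\nabla f(x_{n_k})\|^{1-\mu}\to 0$. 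This coordinate-wise trick in the eigenbasis of $A_k$ is the missing idea in your Part 1 and in your $\tau=1$ case of Part 2.

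Finally, for Part 1 the paper does \emph{not} establish summability at all. Once $\|w_{n_k}\|\to 0$ is known (hence $\|x_{n+1}-x_n\|\to 0$ along any bounded subsequence), the argument is the real-projective-space/connected-cluster-set argument of Theorem 1(3), using that a connected subset of a countable set is a point. Your plan to ``run a KL variant without tracking exponents and obtain summability'' is both vaguer and harder than what is needed.
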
 
The result in part 1 is new only when $\mu =1$ (the remaining case is covered by Theorem \ref{Theorem1}). In part 2, we do not require that $f$ has only countably many critical points. In part 2), it is allowed that $\sup _{\nabla f(x^*)=0}\mu (x^*)\times (1+\tau )=1$, provided the supremum is not attained at any critical point $x^*$. On the other hand, since in general $\mu (x^*)\geq 1/2$, it follows that if $\mu (x^*)(1+\tau )<1$ then we should have $\tau <1$. 
 
In \cite{truong-etal}, the case where $f:\mathbb{R}^2\rightarrow \mathbb{R}$ is defined as $f=u^2+v^2$, where $u=$ the real part of $g$ and $v=$ the imaginary part of $g$, $g$ is a univariate holomorphic (more generally, meromorphic) function and $\tau$ can be $\geq 1$, was treated.  A generalisation of this result, which is a bit complicated to state here, will be presented in Section 3.  

{\bf Python code:} One practical usefulness of the algorithms developed in this paper is that they  are very easy to implement, literally as the pseudo code (and even simplified implementations work pretty well). A python code is available here \cite{tuyenGitHub}.

 {\bf Organisation of the paper:} The remaining of this paper is organised as follows. In Section 2, we recall about New Q-Newton's method method, and define the new algorithm Backtracking New Q-Newton's method. In Section 3, we present the proofs of Theorems \ref{Theorem1} and \ref{Theorem2}, as well as some extensions to solving systems of equations (including some new algorithms resulting from a combination between ideas of Backtracking New Q-Newton's method and that of Regularized Newton's method and Levenberg-Marquardt's method).  This is followed by Section 4, where we present some experimental results comparing the long-term global behaviour of our methods with well known ones (such as BFGS, Levenberg-Marquard's algorithm, Regularized Newton's method and (Adaptive) Cubic Regularization of Newton's method) on some polynomial and non-polynomial systems of equations in several variables. In the last section, we provide some further extensions of the algorithms and the results, as well as some conclusions and plan for future work. We display in this section some examples for basins of attraction for Backtracking New Q-Newton's method, which seem quite regular and do not have fractal structures observed in the standard Newton's method.  Basins of attraction for Backtracking Gradient Descent, on the same example, seem not be that regular. This suggest that to actually prove the regularity observed for Backtracking New Q-Newton's method, one needs to make use of the properties of New Q-Newton's method, and that of Backtracking Gradient Descent method alone are not enough.  In the appendix, we describe in detail how Backtracking New Q-Newton's method work in finding roots of a real function in 1-dimension $F:\mathbb{R}\rightarrow \mathbb{R}$. 
  
{\bf Remark:} This paper is developed from the author's 3 preprints: arXiv:2108.10249, arXiv: 2109.11395, and arXiv:2110.07403, and supersede these papers. 

{\bf Acknowledgments.} The author is partially supported by Young Research Talents grant 300814 from Research Council of Norway. The author thanks Thu Hien To and Vu Dinh  for helping with experiments, thanks Jonathan Hauenstein for helping with some relevant questions, and thanks Cinzia Bisi and Stefano Luzzatto for inspiring discussions on the subject. The recent event in Kongsberg,  which happened when part of this work was being done, has been influential to the author.  

\section{Algorithms} In this section we  first recall the algorithm New Q-Newton's method in \cite{truong-etal}, and then define the new algorithm Backtracking New Q-Newton's method and one simplified version of its. 

Let $A:\mathbb{R}^m\rightarrow \mathbb{R}^m$ be an invertible {\bf symmetric} square matrix. In particular, it is diagonalisable.  Let $V^{+}$ be the vector space generated by eigenvectors of positive eigenvalues of $A$, and $V^{-}$ the vector space generated by eigenvectors of negative eigenvalues of $A$. Then $pr_{A,+}$ is the orthogonal projection from $\mathbb{R}^m$ to $V^+$, and  $pr_{A,-}$ is the orthogonal projection from $\mathbb{R}^m$ to $V^-$. As usual, $Id$ means the $m\times m$ identity matrix.

\medskip
{\color{blue}
 \begin{algorithm}[H]
\SetAlgoLined
\KwResult{Find a minimum of $f:\mathbb{R}^m\rightarrow \mathbb{R}$}
Given: $\{\delta_0,\delta_1,\ldots, \delta_{m}\}\subset \mathbb{R}$\ (chosen {\bf randomly}) and $\alpha >0$;\\
Initialization: $x_0\in \mathbb{R}^m$\;
 \For{$k=0,1,2\ldots$}{ 
    $j=0$\\
    \If{$\|\nabla f(x_k)\|\neq 0$}{
   \While{$\det(\nabla^2f(x_k)+\delta_j \|\nabla f(x_k)\|^{1+\alpha}Id)=0$}{$j=j+1$}}

$A_k:=\nabla^2f(x_k)+\delta_j \|\nabla f(x_k)\|^{1+\alpha}Id$\\
$v_k:=A_k^{-1}\nabla f(x_k)=pr_{A_k,+}(v_k)+pr_{A_k,-}(v_k)$\\
$w_k:=pr_{A_k,+}(v_k)-pr_{A_k,-}(v_k)$\\
$x_{k+1}:=x_k-w_k$
   }
  \caption{New Q-Newton's method} \label{table:alg}
\end{algorithm}
}
\medskip
 
 Experimentally, on small scale problems, New Q-Newton's method works quite competitive against the methods mentioned above, see \cite{truong-etal}.  However, while it can avoid saddle points, it does not have a descent property, and an open question in \cite{truong-etal} is whether it has good convergence guarantees. We now define a new algorithm, Backtracking New Q-Newton's method, incorporating Armijo's  Backtracking line search into New Q-Newton's method, which resolves this convergence issue. Backtracking New Q-Newton's method uses  hyperparameters in a more sophisticated manner than that of New Q-Newton's method, and we need some notations.  For a symmetric, square real matrix $A$, we define: 
  
  $sp(A)=$ the maximum among $|\lambda |$'s, where $\lambda  $ runs in the set of eigenvalues of $A$, this is usually called the spectral radius in the Linear Algebra literature;
  
  and 
  
  $minsp(A)=$ the minimum among $|\lambda |$'s, where $\lambda  $ runs in the set of eigenvalues of $A$, this number is non-zero precisely when $A$ is invertible.
  
 One can easily check the following more familiar formulas: $sp(A)=\max _{||e||=1}||Ae||$ and $minsp(A)=\min _{||e||=1}||Ae||$, using for example the fact that $A$ is diagonalisable.  
 
 \medskip
{\color{blue}
 \begin{algorithm}[H]
\SetAlgoLined
\KwResult{Find a minimum of $f:\mathbb{R}^m\rightarrow \mathbb{R}$}
Given: $\{\delta_0,\delta_1,\ldots, \delta_{m}\} \subset \mathbb{R}$\ (chosen {\bf randomly}), $0<\tau $ and $0<\gamma _0<1$;\\
Initialization: $x_0\in \mathbb{R}^m$\;
$\kappa:=\frac{1}{2}\min _{i\not=j}|\delta _i-\delta _j|$;\\
 \For{$k=0,1,2\ldots$}{ 
    $j=0$\\
  \If{$\|\nabla f(x_k)\|\neq 0$}{
   \While{$minsp(\nabla^2f(x_k)+\delta_j \|\nabla f(x_k)\|^{\tau}Id)<\kappa  \|\nabla f(x_k)\|^{\tau}$}{$j=j+1$}}
  
 $A_k:=\nabla^2f(x_k)+\delta_j \|\nabla f(x_k)\|^{\tau}Id$\\
$v_k:=A_k^{-1}\nabla f(x_k)=pr_{A_k,+}(v_k)+pr_{A_k,-}(v_k)$\\
$w_k:=pr_{A_k,+}(v_k)-pr_{A_k,-}(v_k)$\\
$\widehat{w_k}:=w_k/\max \{1,||w_k||\}$\\
$\gamma :=\gamma _0$\\
 \If{$\|\nabla f(x_k)\|\neq 0$}{
   \While{$f(x_k-\gamma \widehat{w_k})-f(x_k)>-\gamma <\widehat{w_k},\nabla f(x_k)>/3$}{$\gamma =\gamma /3$}}

$x_{k+1}:=x_k-\gamma \widehat{w_k}$
   }
  \caption{Backtracking New Q-Newton's method} \label{table:alg0}
\end{algorithm}
}
\medskip

 In the case $f$ has compact sublevels, one can choose $\hat{w _k}=w_k$ in Backtracking New Q-Newton's method (see the discussion after the proof of Theorem \ref{Theorem1} in Section 3). 
 
In the experiments we also explore a simplified version of Backtracking New Q-Newton's method, named Simplified Backtracking New Q-Newton's method, where instead of projecting to the whole space $A_{k,-}$, we project only to a one-dimensional subvector space corresponding to the smallest negative eigenvalue. Experiments show that this simplified version works similarly to Backtracking New Q-Newton's method. One can also consider the whole eigenspace corresponding to the smallest negative eigenvalue, instead of just one eigenvector corresponding with that eigenvalue. 

 \medskip
{\color{blue}
 \begin{algorithm}[H]
\SetAlgoLined
\KwResult{Find a minimum of $f:\mathbb{R}^m\rightarrow \mathbb{R}$}
Given: $\{\delta_0,\delta_1,\ldots, \delta_{m}\} \subset \mathbb{R}$\ (chosen {\bf randomly}), $0<\tau $ and $0<\gamma _0<1$;\\
Initialization: $x_0\in \mathbb{R}^m$\;
$\kappa:=\frac{1}{2}\min _{i\not=j}|\delta _i-\delta _j|$;\\
 \For{$k=0,1,2\ldots$}{ 
    $j=0$\\
  \If{$\|\nabla f(x_k)\|\neq 0$}{
   \While{$minsp(\nabla^2f(x_k)+\delta_j \|\nabla f(x_k)\|^{\tau}Id)<\kappa  \|\nabla f(x_k)\|^{\tau}$}{$j=j+1$}}
  
 $A_k:=\nabla^2f(x_k)+\delta_j \|\nabla f(x_k)\|^{\tau}Id$\\
$v_k:=A_k^{-1}\nabla f(x_k)=pr_{A_k,+}(v_k)+pr_{A_k,-}(v_k)$\\
$minVec:=0$\\
$minVal:=$ the smallest eigenvalue of $A_k$\\
 \If{$minEig<0$}{minVec:= one unit eigenvector of $A_k$ with eigenvalue minVal}

$w_k:=pr_{A_k,+}(v_k)-<v_k,minVec>minVec$\\
$\widehat{w_k}:=w_k/\max \{1,||w_k||\}$\\
$\gamma :=\gamma _0$\\
 \If{$\|\nabla f(x_k)\|\neq 0$}{
   \While{$f(x_k-\gamma \widehat{w_k})-f(x_k)>-\gamma <\widehat{w_k},\nabla f(x_k)>/3$}{$\gamma =\gamma /3$}}

$x_{k+1}:=x_k-\gamma \widehat{w_k}$
   }
  \caption{Simplified Backtracking New Q-Newton's method} \label{table:algSimple}
\end{algorithm}
}
\medskip

\section{Theoretical results}
We present in this section the proofs of Theorems \ref{Theorem1} and \ref{Theorem2}. We also discuss solving a system of equations, and extend a result in \cite{truong-etal} concerning roots of a meromorphic function in 1 complex variable to higher dimensions. 

\subsection{Proofs of Theorems \ref{Theorem1} and \ref{Theorem2}}

We start with auxilliary lemmas.

\begin{lemma} Let $\delta _0,\ldots ,\delta _m$ be distinct real numbers. Define
\begin{eqnarray*}
\kappa :=\frac{1}{2}\min _{i\not=j}|\delta _i-\delta _j|. 
\end{eqnarray*}
Let $A$ be an arbitrary symmetric $m\times m$ matrix, and $\epsilon $ an arbitrary real number. Then there is $j\in \{0,\ldots ,m\}$ so that $minsp(A+\delta _j\epsilon Id)\geq |\kappa \epsilon|$.

\label{Lemma0}\end{lemma}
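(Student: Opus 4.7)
\medskip

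The plan is to reduce the claim to a pigeonhole argument on the real line. Since $A$ is symmetric, it has $m$ real eigenvalues $\lambda_1,\ldots,\lambda_m$ (with multiplicity), and the eigenvalues of $A+\delta_j\epsilon\, Id$ are simply $\lambda_i+\delta_j\epsilon$ for $i=1,\ldots,m$. Consequently,
\begin{equation*}
minsp(A+\delta_j\epsilon\, Id)=\min_{1\leq i\leq m}|\lambda_i+\delta_j\epsilon|,
\end{equation*}
so the conclusion $minsp(A+\delta_j\epsilon\, Id)\geq |\kappa\epsilon|$ is equivalent to the statement that $|\lambda_i+\delta_j\epsilon|\geq |\kappa\epsilon|$ for every $i$.

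The case $\epsilon=0$ is trivial (both sides are $0$), so assume $\epsilon\neq 0$. Then I would rewrite the inequality $|\lambda_i+\delta_j\epsilon|<|\kappa\epsilon|$ by dividing by $|\epsilon|$ as $|\delta_j-(-\lambda_i/\epsilon)|<\kappa$. For each eigenvalue $\lambda_i$, define the open interval
\begin{equation*}
I_i:=\bigl(-\lambda_i/\epsilon-\kappa,\ -\lambda_i/\epsilon+\kappa\bigr)\subset\mathbb{R},
\end{equation*}
which has length $2\kappa$. Then $\delta_j$ is a ``bad'' choice for eigenvalue $\lambda_i$ precisely when $\delta_j\in I_i$, and the lemma reduces to showing that some $\delta_j$ lies outside the union $I_1\cup\cdots\cup I_m$.

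The key observation is that by the definition of $\kappa$, any two distinct elements of $\{\delta_0,\ldots,\delta_m\}$ are at distance at least $2\kappa$ from each other. Hence no open interval of length $2\kappa$ can contain two of them: if $\delta_i,\delta_j$ both belonged to some $I_k$, then $|\delta_i-\delta_j|<2\kappa$, contradicting $|\delta_i-\delta_j|\geq 2\kappa$. Therefore each of the $m$ intervals $I_1,\ldots,I_m$ contains at most one of the $m+1$ numbers $\delta_0,\ldots,\delta_m$, and by pigeonhole at least one $\delta_j$ lies in none of them. This $\delta_j$ satisfies $|\lambda_i+\delta_j\epsilon|\geq |\kappa\epsilon|$ for all $i$, which is the desired conclusion.

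I do not expect any real obstacle here: the proof is essentially a counting argument once the problem is translated into the eigenvalue picture. The only point requiring a little care is the strictness of the inequality defining $I_i$ versus the (non-strict) lower bound $2\kappa$ on the pairwise distances between the $\delta_j$'s, which is exactly what makes the pigeonhole work.
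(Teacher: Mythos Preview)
Your proof is correct and follows essentially the same approach as the paper: both reduce to the eigenvalues, interpret $minsp(A+\delta_j\epsilon\,Id)$ as a minimum distance on the real line, and apply the pigeonhole principle using that the $m+1$ points are pairwise separated by at least $2\kappa$ (in your normalization) so that the $m$ open intervals of radius $\kappa$ cannot cover all of them. The only cosmetic difference is that you divide through by $|\epsilon|$ and work with the points $-\lambda_i/\epsilon$ and $\delta_j$, whereas the paper works directly with $-\lambda_i$ and $\delta_j\epsilon$; your explicit handling of the trivial case $\epsilon=0$ is a small bonus.
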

 \begin{proof}
 Let $\lambda _1,\ldots ,\lambda _m$ be the eigenvalues of $A$. By definition
 \begin{eqnarray*}
 minsp(A+\delta _j\epsilon Id)=\min _{i=1,\ldots ,m}|\lambda _i+\delta _j\epsilon |. 
 \end{eqnarray*}
 The RHS can be interpreted as the minimum of the distances between the points $-\lambda _i$ and $\delta _j\epsilon$. The set $\{\delta _0\epsilon, \delta _1\epsilon, \ldots ,\delta _m\epsilon \}$ consists of $m+1$ distinct points on the real line, the distance between any 2 of them is $\geq 2|\kappa \epsilon|$.  Hence, by the pigeon hole principle, there is at least one $j$ so that the distance from $-\lambda _i$ to $\delta _j\epsilon $ is $\geq |\kappa \epsilon|$ for all $i=1,\ldots ,m$, which is what needed.  
 \end{proof}

The next lemma considers quantities such as $A_k,v_k,w_k,\ldots $ which appear in Algorithm \ref{table:alg0} for Backtracking New Q-Newton's method.  We introduce hence global notations for them for the ease of exposition. Fix a sequence $\delta _0,\delta _1,\ldots ,\delta _{m}$ of real numbers. Define $\kappa :=\min _{i\not= j}|\delta _i-\delta _j|$. Fix also a number $\alpha >0$, and a $C^2$ function $f:\mathbb{R}^m\rightarrow \mathbb{R}$. For $x\in \mathbb{R}^m$ so that $\nabla f(x)\not= 0$, we define: 
 
 $\delta (x):=\delta _j$, where $j$ is the smallest index for which $minsp (\nabla ^2f(x)+\delta _j||\nabla f(x)||^{1+\tau})\geq \kappa ||\nabla f(x)||^{(\tau )}$; (see Lemma \ref{Lemma0}). 
 
  $A(x):=\nabla ^2f(x)+\delta (x)||\nabla f(x)||^{\tau}$; 
  
  $v(x):=A(x)^{-1}.\nabla f(x)$;
  
  $w(x):=pr_{A(x),+}v(x)-pr_{A(x),-}v(x)$; 
  
  and
  
  $\widehat{w(x)}:=w(x)/\max \{1,||w(x)||\}$. 
  
  \begin{lemma} Let $f$ be a $C^2$ function and $x\in \mathbb{R}^m$ for which $\nabla f(x)\not= 0$. We have:

1) $||\nabla f(x)||/sp(A(x)) \leq ||w(x)||\leq ||\nabla f(x)||/minsp(A(x))$.

2) (Descent direction)  

\begin{eqnarray*}
&&||\nabla f(x)||^2/sp(A(x))\leq <w(x),\nabla f(x)>\leq ||\nabla f(x)||^2/minsp(A(x)), \\
&& minsp(A(x))||w(x)||^2\leq <w(x),\nabla f(x)>\leq sp(A(x))||w(x)||^2.
\end{eqnarray*}  

3) In particular,  Armijo's condition 
\begin{eqnarray*}
f(x-\gamma w(x))-f(x)\leq -\gamma <w(x),\nabla f(x)>/3, 
\end{eqnarray*}
is satisfied for all $\gamma >0$ small enough.

\label{Lemma1}\end{lemma}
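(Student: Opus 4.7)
The plan is to reduce everything to the spectral decomposition of the symmetric invertible matrix $A := A(x)$. Writing $A = \sum_i \lambda_i P_i$, where the $P_i$ are the orthogonal projections onto the mutually orthogonal eigenspaces and the $\lambda_i$ are the distinct real eigenvalues, one has immediately
\[
v(x) = A^{-1}\nabla f(x) = \sum_i \frac{1}{\lambda_i}\, P_i\nabla f(x),
\]
and, since $pr_{A,+}$ and $pr_{A,-}$ just split this sum according to $\mathrm{sgn}(\lambda_i)$,
\[
w(x) = \sum_i \frac{1}{|\lambda_i|}\, P_i\nabla f(x) = |A|^{-1}\nabla f(x),
\]
where $|A| := \sum_i |\lambda_i| P_i$ is symmetric positive-definite with $sp(|A|)=sp(A)$ and $minsp(|A|)=minsp(A)$. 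Recognising $w(x)$ as $|A|^{-1}\nabla f(x)$ is the only mildly non-routine observation; once it is in hand everything reduces to standard bounds for symmetric positive-definite matrices.

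For part 1, I would use the pairwise orthogonality of the vectors $P_i\nabla f(x)$ to expand
\[
\|w(x)\|^2 = \sum_i \frac{1}{|\lambda_i|^2}\, \|P_i\nabla f(x)\|^2, \qquad \|\nabla f(x)\|^2 = \sum_i \|P_i\nabla f(x)\|^2,
\]
and then sandwich $1/sp(A)^2 \leq 1/|\lambda_i|^2 \leq 1/minsp(A)^2$ and take square roots.

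For part 2, the same orthogonal expansion yields $\langle w(x),\nabla f(x)\rangle = \sum_i |\lambda_i|^{-1} \|P_i\nabla f(x)\|^2$, which immediately sandwiches between $\|\nabla f(x)\|^2/sp(A)$ and $\|\nabla f(x)\|^2/minsp(A)$. For the second chain of inequalities I would use the identity $\nabla f(x) = |A|\, w(x)$ (a direct consequence of $w=|A|^{-1}\nabla f$), so that $\langle w(x),\nabla f(x)\rangle = \langle w(x),|A|w(x)\rangle$; this is then bounded in $[\,minsp(|A|)\|w\|^2,\; sp(|A|)\|w\|^2\,]$ by the Rayleigh quotient characterisation of the extreme eigenvalues of the positive-definite matrix $|A|$.

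For part 3, I would apply Taylor's theorem at $x$: since $f$ is $C^2$,
\[
f(x-\gamma w(x)) - f(x) = -\gamma \langle w(x),\nabla f(x)\rangle + O(\gamma^2)
\]
as $\gamma \to 0^+$. By part 2 together with $\nabla f(x)\neq 0$, we have $\langle w(x),\nabla f(x)\rangle \geq \|\nabla f(x)\|^2/sp(A) > 0$, and hence
\[
f(x-\gamma w(x)) - f(x) + \tfrac{\gamma}{3}\langle w(x),\nabla f(x)\rangle = -\tfrac{2\gamma}{3}\langle w(x),\nabla f(x)\rangle + O(\gamma^2) \leq 0
\]
for all sufficiently small $\gamma>0$, which is exactly Armijo's condition. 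The main obstacle, such as it is, is just cleanly setting up the identification $w=|A|^{-1}\nabla f$; the rest is bookkeeping with Rayleigh quotients and a one-line Taylor estimate.
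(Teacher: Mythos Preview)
Your proof is correct and follows essentially the same approach as the paper: both diagonalise $A(x)$ and read off the three quantities $\|w\|^2$, $\|\nabla f\|^2$, $\langle w,\nabla f\rangle$ as weighted sums of the eigencomponents, then sandwich the weights $1/|\lambda_i|$ between $1/sp(A)$ and $1/minsp(A)$. Your packaging via $w=|A|^{-1}\nabla f$ and the Rayleigh quotient is slightly slicker notation, and for part 3 your bare $O(\gamma^2)$ remainder (valid since $x$ and $w(x)$ are fixed) is a minor shortcut over the paper's explicit second-order term, but the substance is identical.
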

\begin{proof}
We denote by $e_1,\ldots ,e_m$ an orthonormal basis of eigenvectors of $A(x)$, and let $\lambda _i$ be the corresponding eigenvalue of $e_i$. If we write: 
$$\nabla f(x)=\sum _{i=1}^ma_ie_i=\sum _{\lambda _i>0}a_ie_i+\sum _{\lambda _i<0}a_ie_i,$$
then by definition
\begin{eqnarray*}
v(x)&=&\sum _{\lambda _i>0}a_ie_i/\lambda _i+\sum _{\lambda _i<0}a_ie_i/\lambda _i=\sum _{\lambda _i>0}a_ie_i/|\lambda _i|-\sum _{\lambda _i<0}a_ie_i/|\lambda _i|,\\
w(x)&=&\sum _{\lambda _i>0}a_ie_i/|\lambda _i|+\sum _{\lambda _i<0}a_ie_i/|\lambda _i|=\sum _{i=1}^ma_ie_i/|\lambda _i|. 
\end{eqnarray*}
 
Hence, by calculation 
\begin{eqnarray*}
<w(x),\nabla f(x)>&=&\sum _{i=1}^ma_i^2/|\lambda _i|,\\
||\nabla f(x)||^2&=&\sum _{i=1}^ma_i^2,\\
||w(x)||^2&=&\sum _{i=1}^ma_i^2/|\lambda _i|^2.\\
\end{eqnarray*}

From this, we immediately obtain 1) and  2).

3)  By Taylor's expansion, for $\gamma $ small enough: 
\begin{eqnarray*}
f(x-\gamma w(x))-f(x)&=&-\gamma <w(x),\nabla f(x)> +\frac{\gamma ^2}{2}<\nabla ^2f(x)w(x),w(x)>+o(||\gamma w(x)||^2)\\
&=&-\gamma <w(x),\nabla f(x)>+\gamma ^2O(<w(x),\nabla f(x)>).
\end{eqnarray*}
The last equality follows from 2), where we know that $||w(x)||^2\sim <w(x),\nabla f(x)>$.  Hence, when $\gamma$ is small enough we obtain 3). 
\end{proof}
  
Fix $0<\gamma _0<1$. The above lemma shows that, for $x$ with $\nabla f(x)\not= 0$,  the following quantity (Armijo's step-size) is a well-defined positive number: 
  
  $\gamma (x):=$ the largest number $\gamma $ among the sequence $\{\gamma _0,\gamma _0/3, \gamma _0/3^2,\ldots ,\gamma _0/3^j,\ldots \}$ for which  
  \begin{eqnarray*}
  f(x-\gamma w(x))-f(x)\leq -\gamma <w(x),\nabla f(x)>/3. 
  \end{eqnarray*}
    
  The next lemma explores properties of this quantity.

  \begin{lemma} Let $\mathcal{B}\subset \mathbb{R}^m$ be a compact set so that $\epsilon =\inf_{x\in \mathcal{B}}||\nabla f(x)||>0$. Then

  1) There is $\tau >0$ for which: 
\begin{eqnarray*}
\sup _{x\in \mathcal{B}}sp(A(x))&\leq& \tau ,\\
\inf _{x\in \mathcal{B}}minsp(A(x))&\geq &1/\tau . 
\end{eqnarray*}

2) $\inf _{x\in \mathcal{B}}\gamma (x)> 0$.  

    \label{Lemma2}\end{lemma}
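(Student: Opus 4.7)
The plan is to use the compactness of $\mathcal{B}$ together with the continuity of $\nabla f$ and $\nabla ^2f$ to upgrade the pointwise estimates of Lemma \ref{Lemma1} to uniform ones.

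For part 1, I would first note that by continuity on the compact set $\mathcal{B}$ there are constants $M_1,M_2>0$ with $||\nabla f(x)||\leq M_1$ and $||\nabla ^2f(x)||\leq M_2$ for all $x\in \mathcal{B}$. Since $\delta (x)\in \{\delta _0,\ldots ,\delta _m\}$, the triangle inequality and sub-multiplicativity of $sp$ yield
\[
sp(A(x))\leq M_2+(\max_j|\delta _j|)\,M_1^{\tau}
\]
uniformly on $\mathcal{B}$. The lower bound on $minsp(A(x))$ is, on the other hand, built into how $\delta (x)$ is chosen: by Lemma \ref{Lemma0} and the while-loop in the algorithm, $minsp(A(x))\geq \kappa ||\nabla f(x)||^{\tau}\geq \kappa \epsilon ^{\tau}$ throughout $\mathcal{B}$. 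The constant of the lemma (which is an abuse of the letter $\tau$, distinct from the exponent) can then be taken to be the larger of $M_2+(\max_j|\delta _j|)M_1^{\tau}$ and $1/(\kappa \epsilon ^{\tau})$.

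For part 2, the plan is to produce a uniform version of the Taylor argument used in Lemma \ref{Lemma1}(3). By Lemma \ref{Lemma1}(1) and part 1, $||w(x)||\leq ||\nabla f(x)||/minsp(A(x))$ is uniformly bounded above on $\mathcal{B}$. Hence there is $\gamma _1>0$ such that for every $x\in \mathcal{B}$ and every $\gamma \in [0,\gamma _1]$ the segment joining $x$ to $x-\gamma w(x)$ stays inside a fixed compact neighbourhood $\mathcal{B}'$ of $\mathcal{B}$, on which $||\nabla ^2f||$ is bounded by some constant $M_2'>0$. The second-order Taylor theorem on that segment then gives
\[
f(x-\gamma w(x))-f(x)\leq -\gamma <w(x),\nabla f(x)>+\tfrac{\gamma ^2}{2}M_2'\,||w(x)||^2
\]
for all such $x$ and $\gamma$. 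Combining this with Lemma \ref{Lemma1}(2), which says $<w(x),\nabla f(x)>\geq minsp(A(x))\,||w(x)||^2$, and with the uniform lower bound on $minsp(A(x))$ from part 1, I expect Armijo's inequality to hold for every $\gamma \leq \gamma _{*}$, where $\gamma _{*}$ is an explicit positive constant depending only on $\mathcal{B}$, $\mathcal{B}'$ and the constants from part 1. Since $\gamma (x)$ is defined as the largest term of the geometric sequence $\{\gamma _0/3^j\}$ satisfying Armijo, choosing $j_0$ to be the smallest integer for which $\gamma _0/3^{j_0}\leq \gamma _{*}$ forces $\gamma (x)\geq \gamma _0/3^{j_0}>0$ uniformly in $x\in \mathcal{B}$, which is exactly what is needed.

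The main obstacle, such as it is, is bookkeeping: isolating an auxiliary neighbourhood $\mathcal{B}'$ and extracting a single Hessian bound $M_2'$ valid throughout the segment appearing in the Taylor expansion, independently of $x\in \mathcal{B}$. No new analytic ingredient beyond compactness and Lemmas \ref{Lemma0}, \ref{Lemma1} is expected to be required, since all the substantive estimates have already been established pointwise in Lemma \ref{Lemma1}(3).
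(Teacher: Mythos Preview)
Your proposal is correct and follows essentially the same route as the paper: compactness bounds on $\nabla f$ and $\nabla^2 f$ plus the built-in lower bound $minsp(A(x))\geq \kappa\,||\nabla f(x)||^{\tau}$ from Lemma~\ref{Lemma0} for part 1, and then a uniform Taylor-expansion argument feeding off Lemma~\ref{Lemma1}(2) for part 2. If anything, you are more careful than the paper, which simply invokes ``Taylor's expansion as in the proof of part 3) of Lemma~\ref{Lemma1}'' without isolating the auxiliary compact neighbourhood $\mathcal{B}'$ needed to control the Hessian along the whole segment $[x,\,x-\gamma w(x)]$; your explicit mention of $\mathcal{B}'$ and $M_2'$ fills that small gap.
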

\begin{proof}

1) Since both $\nabla ^2f(x)$ and $\nabla f(x)$ have bounded norms on the compact set $\mathcal{B}$, and $\delta _j$ belongs to a finite set, it follows easily that $\sup _{x\in \mathcal{B}}sp(A(x))<\infty$. (For example, this is seen by using that for a real symmetric matrix $A$, we have $sp(A)=\max _{||v||=1}|<Av,v>| $.)

By Lemma \ref{Lemma0}, $\inf _{x\in \mathcal{B}}minsp(A(x))\geq \kappa \epsilon ^{1+m}$. 

2) From 1) and Lemma \ref{Lemma1},  there is $\tau >0$ so that for all $x\in \mathcal{B}$ we have
\begin{eqnarray*}
&&||\nabla f(x)||^2/\tau \leq <w(x),\nabla f(x)>\leq \tau ||\nabla f(x)||^2, \\
&& ||w(x)||^2/\tau \leq <w(x),\nabla f(x)>\leq \tau ||w(x)||^2.
\end{eqnarray*}  

From this, using Taylor's expansion as in the proof of part 3) of Lemma \ref{Lemma1} and the assumption that $f$ is $C^2$, it is easy to obtain the assertion that $\inf _{x\in \mathcal{B}}\gamma (x)> 0$. 

\end{proof}  

We recall that there is so-called real projective space $\mathbb{P}^m$, which is a compact metric space and which contains $\mathbb{R}^m$ as a {\bf topological} subspace. If $x,y\in \mathbb{R}^m$ and $d(.,.)$ is the metric on   $\mathbb{P}^m$, then 
\begin{eqnarray*}
d(x,y):=\arccos (\frac{1+<x,y>}{\sqrt{1+||x||^2}\sqrt{1+||y||^2}}). 
\end{eqnarray*} 
It is known that there is a constant $C>0$ so that for $x,y\in \mathbb{R}^m$ we have $d(x,y)\leq C||x-y||$, see e.g. \cite{truong-nguyen2}. Real projective spaces were used in \cite{truong-nguyen1, truong-nguyen2} to establish good convergence guarantees for Backtracking GD and modifications.

 Now we prove Theorem \ref{Theorem1}. 
 \begin{proof}[Proof of Theorem \ref{Theorem1}]

1) This is by construction. 

2) It suffices to show that if $\{x_{n_k}\}$ is a bounded subsequence of $\{x_n\}$, then $\lim _{n\rightarrow\infty}\nabla f(x_{n_k})=0$. If it is not the case, we can assume, by taking a further subsequence if needed, that $\inf _{k}||\nabla f(x_{n_k})||>0$. Then, by Lemma \ref{Lemma2}, upto some positive constants, $<\nabla f(x_{n_k}),w(x_{n_k})>$ $\sim$ $||\nabla f(x_{n_k})||^2$ $\sim$ $||w(x_{n_k})||^2$ for all $k$. Since $||\nabla f(x_{n_k})||$ is bounded, it follows that $w_{n_k}\sim \widehat{w_{n_k}}$.   

It follows from the fact that $\{f(x_n)\}$ is a decreasing sequence, that $\lim _{k\rightarrow\infty}(f(x_{n_k})-f(x_{n_{k}+1}))=0$. It follows from Armijo's condition that $\lim _{k\rightarrow\infty}\gamma _{n_k}<\nabla f(x_{n_k}),\widehat{w(x_{n_k})}>=0$. By Lemma \ref{Lemma2} we have $\inf _{k}\gamma _{n_k}>0$, and hence $\lim _{k\rightarrow\infty}<\nabla f(x_{n_k}),\widehat{w(x_{n_k})}>=0$.  From the equivalence in the previous paragraph about $w_{n_k}\sim \widehat{w_{n_k}}$ and $<\nabla f(x_{n_k}),\widehat{w(x_{n_k})}>\sim ||w_{n_k}||^2\sim ||\nabla f(x_{n_k})||^2$, it follows that $\lim _{k\rightarrow\infty}\nabla f(x_{n_k})=0$, a contradiction.

3) We show first that $\lim _{n\rightarrow\infty}d(x_n,x_{n+1})=0$, where $d(.,.)$ is the projective metric defined above.  It suffices to show that each subsequence $\{x_{n_k}\}$ has another subsequence for which the needed claim holds. By taking a further subsequence if necessary, we can assume that either $\lim _{k\rightarrow\infty}||x_{n_k}||=\infty$ or $\lim _{k\rightarrow\infty}x_{n_k}=x_{\infty}$ exists. 

The first case:  $\lim _{k\rightarrow\infty}||x_{n_k}||=\infty$. In this case, since $x_{n_{k}+1}=x_{n_k}-\gamma _{n_k}\widehat{w_{n_k}}$, where both $\gamma _{n_k}$ and $\widehat{w_{n_k}}$ are bounded, it follows easily from the definition of the projective metric that $\lim _{k\rightarrow \infty}d(x_{n_k+1},x_{n_k})=0$. 

The second case: $\lim _{k\rightarrow\infty}x_{n_k}=x_{\infty}$ exists. In this case, by part 1) of Theorem \ref{Theorem1}, $x_{\infty}$ is a critical point of $f$.  Since $f$ is Morse by assumption, we have that $\nabla ^2f(x_{\infty})$ is invertible. Then, by the proof of part 4) below, we have that $\lim _{k\rightarrow\infty}\widehat{w_{n_k}}=0$. Then, since $\gamma _{n_k}$ is bounded, one obtain that
\begin{eqnarray*}
\lim _{k\rightarrow\infty}||x_{n_k+1}-x_{n_k}||=\lim _{k\rightarrow\infty}||\gamma _{n_k}\widehat{w_{n_k}}||=0. 
\end{eqnarray*}
Thus by the inequality between the projective metric and the usual Euclidean norm, one obtains $\lim _{k\rightarrow\infty}d(x_{n_k+1},x_{n_k})=0$. 

With this claim proven, one can proceed as in \cite{truong-nguyen2}, using part 2) above and the fact that the set of critical points of a Morse function is at most countable, to obtain a bifurcation: either $\lim _{n\rightarrow\infty}x_n$ converges to a critical point of $f$, or $\lim _{n\rightarrow\infty}||x_n||=\infty$.  

If the initial point $x_0$ is randomly chosen, then we know  from 4) below that the limit point $x_{\infty}$ cannot be a saddle point. Since a critical point of a Morse function is either a saddle point or a local minimum, we conclude that $x_{\infty}$ must be a local minimum of $f$. 

4)  Assume that $x_n$ converges to $x_{\infty}$ and $x_{\infty}$ is a saddle point. Then $\nabla ^2f(x_{\infty})$ is invertible while $\nabla f(x_{\infty})=0$. It follows that there is a small open neighbourhood $U$ of $x_{\infty}$ so that for all $x\in U$ we have $\delta (x)=\delta _0$. By shrinking $U$ if necessary, we also have that $||w(x)||\leq \epsilon$ for all $x\in U$, and hence $\widehat{w(x)}=w(x)$, for all $x\in U$. Here $\epsilon >0$ is a small positive number, to be determined later. 

By Taylor's expansion, and using $A(x)=\nabla ^2f(x)+\delta _0||\nabla f(x)||^{\tau}Id=\nabla ^2f(x)+o(1)$, we have 
\begin{eqnarray*}
f(x-w(x))-f(x)&=&-<w(x),\nabla f(x)>+\frac{1}{2}<\nabla ^2f(x)w(x),w(x)>+o(||w(x)||^2)\\
&=&-<w(x),\nabla f(x)>+\frac{1}{2}<A(x)w(x),w(x)>+o(||w(x)||^2).
\end{eqnarray*}

We denote by $e_1,\ldots ,e_m$ an orthonormal basis of eigenvectors of $A(x)$, and let $\lambda _i$ be the corresponding eigenvalue of $e_i$. If we write: 
$$\nabla f(x)=\sum _{i=1}^ma_ie_i=\sum _{\lambda _i>0}a_ie_i+\sum _{\lambda _i<0}a_ie_i,$$
then by definition
\begin{eqnarray*}
v(x)&=&\sum _{\lambda _i>0}a_ie_i/\lambda _i+\sum _{\lambda _i<0}a_ie_i/\lambda _i=\sum _{\lambda _i>0}a_ie_i/|\lambda _i|-\sum _{\lambda _i<0}a_ie_i/|\lambda _i|,\\
w(x)&=&\sum _{\lambda _i>0}a_ie_i/|\lambda _i|+\sum _{\lambda _i<0}a_ie_i/|\lambda _i|=\sum _{i=1}^ma_ie_i/|\lambda _i|. 
\end{eqnarray*}

Then $A(x)w(x)=\sum _{\lambda _i>0}a_ie_i-\sum _{\lambda _i<0}a_ie_i$.  Therefore,
\begin{eqnarray*}
\frac{1}{2}<A(x)w(x),w(x)>&=&\frac{1}{2}<\sum _{\lambda _i>0}a_ie_i-\sum _{\lambda _i<0}a_ie_i,\sum _{i=1}^ma_ie_i/|\lambda _i|>\\
&=&\frac{1}{2}(\sum _{\lambda _i>0}a_i^2/|\lambda _i|-\sum _{\lambda _i<0}a_i^2/|\lambda _i|)\\
&\leq&\frac{1}{2}\sum _{i=1}^ma_i^2/|\lambda _i|\\
&=&\frac{1}{2}<w(x),\nabla f(x)>. 
\end{eqnarray*}

Combining all the above, we obtain: 
\begin{eqnarray*}
f(x-w(x))-f(x)\leq -\frac{1}{2}<w(x),\nabla f(x)>+o(||w(x)||^2),
\end{eqnarray*}
for all $x\in U$. By Lemma \ref{Lemma1}, for $x\in U$ we have $<w(x),\nabla f(x)>\sim ||w(x)||^2$. Hence, $\gamma (x)=\gamma _0$ for all $x\in U$. 

This means that $x_{n+1}=x_n-\gamma _0w_n$ for $n$ large enough, that is, Backtracking New Q-Newton's method becomes New Q-Newton's method. Then, for the case $\tau >1$, the results in \cite{truong-etal, truongnew} finish the proof that $x_{\infty}$ cannot be a saddle point. The case $0<\tau \leq 1$ is similarly proven: the same integral representation of $pr_{A(x),\pm}$ shows again that the associated dynamical system is $C^1$, and hence arguments in \cite{truong-etal, truongnew}  apply. 

5) and 6): follow easily from the corresponding parts in \cite{truong-etal} and the above parts. 
\end{proof}
 
 \begin{proof}[Proof of Theorem \ref{Theorem2}]
1) As in part 2) of Theorem \ref{Theorem1},  it suffices to show that  for a subsequence $x_{n_k}$ converging to a critical point $x^*$: $\lim_{k\rightarrow\infty}||w_{n_k}||=0$. We look at a new function $F:\mathbb{R}^{2m}\rightarrow \mathbb{R}$ given by $F(x,y)=<\nabla f(x),y>$. We have $F(x=x^*,y=0)=0$. Moreover, $\nabla F=(\nabla ^2f(x).y,\nabla f(x))$.  Hence, by assumption, there exists $C>0$ and $\mu <1$, and an open neighborhood $U$ of $x^*$ as well as $\epsilon >0$ so that for all $x\in U$ and all $||y||=1$:
\begin{eqnarray*}
|<\nabla f(x),y>|^{\mu }\leq C(||\nabla ^2f(x).y||+||\nabla f(x)||). 
\end{eqnarray*}

We let $\{e_{j,k}\}_{j=1,\ldots ,m}$ be an orthonormal basis of $A_k:=A(x_k)$, with the corresponding eigenvalues $\{\lambda _{j,k}\}_{j=1,\ldots ,m}$. Then, by replacing $C$ with a bigger constant, and substituting $x$ by $x_{n_k}$ and $y$ by $e_{j,k}$ we obtain also the inequality
\begin{eqnarray*}
|<\nabla f(x_{n_k}),e_{j,k}>|^{\mu }\leq C(||\nabla ^2f(x_{n_k}).e_{j,k}+\delta ||\nabla f(x_{n_k})||e_{j,k}||+||\nabla f(x_{n_k})||). 
\end{eqnarray*}
If we choose $\delta$ appropriately from the finite set $\delta _0,\ldots ,\delta _m$, we obtain $A(x_{n_k})$.  

Since $\tau \leq 1$ and $||e_{j,k}||=1$, it follows that $||A(x).e_{j,k}||\geq minsp(A(x))\geq \kappa ||\nabla f(x)||$. Therefore, by replacing $C$ by a bigger constant, we obtain that $|<\nabla f(x_{n_k}),e_{j,k}>|^{\mu }\leq C||A(x).e_{j,k}||$ for every $j$. Therefore, we obtain
\begin{eqnarray*}
||w_{n_k}||\leq C||\nabla f(x_{n_k})||^{1-\mu}\rightarrow 0, 
\end{eqnarray*}
as wanted. 

2) Similar to \cite{truong-etal}, we need to show that if $x^*$ is a critical point of $f$, there exists a constant $C>0$ and $\theta <1$ so that if a subsequence $\{x_{n_k}\}$ converges to $x^*$, then:
\begin{eqnarray*}
\lim _{k\rightarrow \infty}||w_{n_k}||&=&0,\\
<w_{n_k},\nabla f(x_{n_k})>&\geq& C||w_{n_k}||\times |f(x_{n_{k+1}})-f(x_{n_k})|^{\theta}.
\end{eqnarray*}

The equality follows from part 2 of Theorem \ref{Theorem1} (if $0<\tau <1$) and from part 1 above if $\tau =1$. 

Now we prove the inequality. We have
\begin{eqnarray*}
||A_{n_k}.e_{j,n_k}||\geq minsp(A_{n_k})\geq \kappa ||\nabla f(x_{n_k})||^{\tau}. 
\end{eqnarray*}

Then, computed as in \cite{truong2021}, we obtain
\begin{eqnarray*}
<w_{n_k},\nabla f(x_{n_k})>\geq C||w_{n_k}||\times ||\nabla f(x_{n_k})||^{1+\tau}. 
\end{eqnarray*}
From this, the claim follows by the definition of $\mu (x^*)$: we can choose $\theta =\mu (1+\tau )<1$ for some choices of $\mu \geq \mu (x^*)$.

 \end{proof}

\subsection{Solving systems of equations}

Now we discuss an application of the above results to solving systems of equations. Assume that we are given a real analytic map $F:\mathbb{R}^m\backslash \mathcal{A}\rightarrow \mathbb{R}^{m'}$, where $\mathcal{A}$ is a closed set of $\mathbb{R}^{m}$ of Lebesgue measure 0 for which: $\lim _{x\rightarrow A}||F(x)||=+\infty$.  We consider the question of finding solutions  to $F(x)=0$. Following the usual trick, we define $f(x)=\frac{1}{2}||F(x)||^2: ~\mathbb{R}^m\rightarrow [0,+\infty ]$. A solution to $F=0$ is a global minimum of $f$, so we can consider the question of finding global minima to $f$ (which still make sense even if the system $F=0$ has no solutions).   Since $\mathcal{A}$ has Lebesgue measure zero and since the sequence constructed by Backtracking New Q-Newton's method has decreasing function values, if one chooses a random point $x_0$ then it will belong to $\mathbb{R}^m\backslash \mathcal{A}$ and the whole sequence $\{x_n\}$ will also belong to $\mathbb{R}^m\backslash \mathcal{A}$. One can apply Theorems \ref{Theorem1} and \ref{Theorem2} to show that a cluster point of $\{x_n\}$ must be a critical point of $f$. Moreover, the constructed sequence $\{x_n\}$ either {\bf converges} to a unique critical point of $f$ or the norms $\{||x_n||\}$ converges to $+\infty$,   provided conditions in those mentioned theorems are satisfied. (For example, if $f$ has at most countably many critical points - which is a generic condition; or if $F$ is a polynomial map and $\tau $ is small enough - depending on the degree of $F$. Indeed, it is shown in \cite{acunto-kurdyka} that the Lojasiewicz exponent of a polynomial map is globally bounded in terms of the map's degree, and hence if we choose $\tau$ small enough then conditions in part 2 of Theorem \ref{Theorem2} are satsified.) Part 4 of Theorem \ref{Theorem1} then allows one to conclude that the limit point of $\{x_n\}$ cannot be a saddle point, hence roughly must be a local minimum of $f$. 

There is need for much more further research, since the above results are still not enough to solve the system $F=0$.  On the one hand, local minima of $f$ may not be global minima. On the other hand, the requirement that $\tau $ is small enough may be difficult to check, since strictly speaking one then must know an upper bound for the global Lojasiewicz exponent, which can be  too small to be useful in numerical calculations, even in the case where $F$ is a polynomial map the bound in \cite{acunto-kurdyka} is extremely small compared with the degree. Note, however, on the experimental side, see the next Section, Backtracking New Q-Newton's method works very well and very flexible on its parameters (for example, the algorithm run wells on various examples even if one chooses $\tau =1$). 

For the case where $g:\mathbb{C}\backslash \mathcal{A}\rightarrow \mathbb{C}$ is a holomorphic function with poles at  $\mathcal{A}$, and one defines $F(x,y)=(Re(g(x+iy)), Im(g(x+iy))):\mathbb{R}^2\backslash \mathcal{A}\rightarrow \mathbb{R}^2$ consisting of the real and imaginary parts of $g(z)$ (where $z=x+iy$), then \cite{truong-etal} shows that the above   obstacles can be resolved provided that $g$ satisfies the following generic condition: For every $z_0\in \mathbb{C}$, if  $g(z_0)g"(z_0)=0$ then $g'(z_0)\not= 0$.  Indeed, under this generic condition, it is proven in \cite{truong-etal} that if one applies Backtracking New Q-Newton's method (no matter what value of $\tau$ is) for the function $f$, from a random initial point $z_0\in \mathbb{R}^2=\mathbb{C}$, then for the constructed sequence $\{z_n\}$ one has a dichotomy: either $\{z_n\}$ converges to a zero $z*$ of $g(z)$, or $\lim _{n\rightarrow\infty}||z_n||=+\infty$.  The main ingredient in \cite{truong-etal} is to make use of Cauchy-Riemann's equations to relate critical points of $f$ which are not zeros of $F$ to saddle points of $f$. 

The following result generalises part of the mentioned result in \cite{truong-etal}. 
\begin{theorem} Assume that $f:\mathbb{R}^m\rightarrow \mathbb{R}$ satisfies the Lojasiewicz gradient inequality. Let $\{x_n\}$ be a sequence constructed by New Q-Newton's Backtracking G. Let $\{e_{k}\}$ be an orthonormal basis for $A_k:=A(x_k)$. Assume also that $0<\tau \leq 1$, and that whenever a subsequence $x_{k_n}$ converges,  we have: 
\begin{eqnarray*}
\liminf _{n\rightarrow\infty}\frac{\min _{i\in \Lambda _{k_n}} ||A_{k_n}.e_{i,k}||}{\max _{i\in \Lambda _{k_n}} ||A_{k_n}.e_{i,k_n}||}>0, 
\end{eqnarray*}
where $\Lambda _{k_n}=\{i:~<\nabla f(x_{k_n}),e_{i,k_n}>\not= 0\}$. (Moreover, if $\tau =1$, then assume that $\nabla f$  also satisfies the Lojasiewicz gradient inequality.) 

Then either $\lim _{n\rightarrow\infty}||x_n||=\infty$ or $\{x_n\}$ converges to a critical point of $f$. . 

\label{Theorem3}\end{theorem}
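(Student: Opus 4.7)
My plan is to run the Kurdyka--Lojasiewicz convergence machinery, with the new ingredient being that the ``angle condition'' between $w_k$ and $\nabla f(x_k)$ is now forced by the eigenvalue ratio hypothesis rather than by any standard directional property. By part (2) of Theorem \ref{Theorem1}, every cluster point of $\{x_n\}$ is already a critical point of $f$. Hence it suffices to prove: if some subsequence $x_{k_n} \to x^*$, then the entire sequence $\{x_n\}$ converges to $x^*$; the dichotomy in the theorem statement then follows.

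The key estimate is an angle bound. Expanding $\nabla f(x_k) = \sum_{i} a_{i,k} e_{i,k}$ in the eigenbasis $\{e_{i,k}\}$ of $A_k$ with eigenvalues $\lambda_{i,k}$, the calculation already done in Lemma \ref{Lemma1} gives
\begin{eqnarray*}
\langle w_k, \nabla f(x_k)\rangle &=& \sum_{i \in \Lambda_k} a_{i,k}^2/|\lambda_{i,k}| \ \geq\ \|\nabla f(x_k)\|^2/\lambda_{\max,k},\\
\|w_k\|^2 &=& \sum_{i \in \Lambda_k} a_{i,k}^2/|\lambda_{i,k}|^2 \ \leq\ \|\nabla f(x_k)\|^2/\lambda_{\min,k}^2,
\end{eqnarray*}
where $\lambda_{\min,k} = \min_{i \in \Lambda_k} |\lambda_{i,k}|$ and $\lambda_{\max,k} = \max_{i \in \Lambda_k} |\lambda_{i,k}|$ are precisely the quantities appearing in the ratio hypothesis. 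Dividing, the new assumption yields some constant $c > 0$ such that
$$\langle w_k, \nabla f(x_k)\rangle \ \geq\ c\, \|w_k\|\, \|\nabla f(x_k)\|$$
along any relevant subsequence. Combined with Armijo's condition (noting that the normalization $\max\{1,\|w_k\|\}$ hidden in $\widehat{w_k}$ appears symmetrically in $\langle \widehat{w_k},\nabla f(x_k)\rangle$ and in $\|x_{k+1}-x_k\|$, so it cancels), this produces the KL-ready descent inequality
$$f(x_k) - f(x_{k+1}) \ \geq\ \frac{c}{3}\, \|x_{k+1}-x_k\|\, \|\nabla f(x_k)\|.$$

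From here I would run the standard Lojasiewicz telescoping. Assume $f(x_k) > f(x^*)$ for all $k$ (otherwise the sequence is eventually stationary and the conclusion is immediate). The Lojasiewicz inequality at $x^*$ gives $(f(x)-f(x^*))^\mu \leq C \|\nabla f(x)\|$ on some neighborhood $U$ of $x^*$. Setting $\phi(t) = t^{1-\mu}/(1-\mu)$, concavity combined with the two inequalities above yields
$$\phi\bigl(f(x_k)-f(x^*)\bigr) - \phi\bigl(f(x_{k+1})-f(x^*)\bigr) \ \geq\ \frac{c}{3C}\, \|x_{k+1}-x_k\|,$$
so that $\sum_{k \geq N} \|x_{k+1}-x_k\| \leq (3C/c)\, \phi(f(x_N)-f(x^*))$ whenever $x_k \in U$ for $k \geq N$. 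Taking $N = k_n$ large enough, $x_{k_n}$ is close to $x^*$ and $\phi(f(x_{k_n})-f(x^*))$ is small; an induction on $j \geq k_n$ then shows that the iterates cannot exit $U$, because any hypothetical first exit is precluded by the telescoped bound on the partial sums. Thus $\sum\|x_{k+1}-x_k\| < \infty$, the sequence is Cauchy, and $x_n \to x^*$.

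The case $\tau = 1$ needs the auxiliary Lojasiewicz hypothesis on $\nabla f$ for a single reason: to guarantee $\|w_k\| \to 0$ along the subsequence, so that $\widehat{w_k} = w_k$ eventually and the argument above is free of the $\max\{1,\|w_k\|\}$ nuisance. For $\tau < 1$ this is automatic from $\mathrm{minsp}(A_k) \geq \kappa \|\nabla f(x_k)\|^\tau$, giving $\|w_k\| \leq \|\nabla f(x_k)\|^{1-\tau}/\kappa \to 0$; for $\tau = 1$ one reuses verbatim the bound $\|w_k\| \leq C\|\nabla f(x_k)\|^{1-\mu}$ derived in the proof of Theorem \ref{Theorem2}(1). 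The hard part of the scheme is the trapping induction: one must know the descent inequality is still valid at every step where the iterate is in $U$, which is exactly why the ratio hypothesis is phrased as a $\liminf$ along convergent subsequences rather than only at the limit point.
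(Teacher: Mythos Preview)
Your proof is correct and matches the paper's intended argument: the paper's own proof of this theorem is a one-line reference to \cite{truong-etal}, and the proof of Theorem~\ref{Theorem2} in this paper makes clear that the method there is precisely the angle-condition-plus-Lojasiewicz-telescoping scheme you carry out (the ratio hypothesis forces $\langle w_k,\nabla f(x_k)\rangle \geq c\,\|w_k\|\,\|\nabla f(x_k)\|$, Armijo turns this into the step-size-free descent inequality, and the concave desingularizer $\phi$ with the trapping induction finishes). One cosmetic remark: in the G version the basis $\{e_{i,k}\}$ need not consist of eigenvectors of $A_k$, so your $|\lambda_{i,k}|$ should be read throughout as $\|A_k e_{i,k}\|$; with that substitution your displayed formulas and the ensuing inequalities are unchanged.
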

\begin{proof}
The proof is similar to that in \cite{truong-etal}. 
\end{proof}

One can also combine the ideas of Backtracking New Q-Newton's method with that of Regularized Newton's method an Levenberg-Marquardt's method to obtain methods with better theoretical guarantees and performances, specifically for  systems of equations.  Below we present two such algorithms, together with some theoretical results. Readers can see the next section on how these methods work on solving systems of equations, and see the last section for further generalisations. 

Given $F:\mathbb{R}^m\rightarrow \mathbb{R}^{m'}$ a $C^2$ function, we denote by $H(x)=JF(x)$ the Jacobian of $F$, and $f(x)=||F(x)||^2$. Note that $\nabla f(x)=2H(x)^{\intercal}F(x)$.  Hence the last While loop in the  algorithm  terminates after a finite time, see the previous Subsection. 

\medskip
{\color{blue}
 \begin{algorithm}[H]
\SetAlgoLined
\KwResult{Find a zero of $F:\mathbb{R}^m\rightarrow \mathbb{R}^{m'}$}
Given: $0<\delta_0,\delta_1,\ldots ,\delta _m$  and $0<\tau $;\\
Define: $H(x):=JF(x)$ and $f(x)=||F(x)||^2$;\\
Define: $\kappa :=\inf _{i\not= j}|\delta _i-\delta _j|/2$;\\
Initialization: $x_0\in \mathbb{R}^m$\;
 \For{$k=0,1,2\ldots$}{ 
    $j=0$\\
  \If{$F(x_k)\neq 0$}{
  \If{$minsp(\nabla ^2f(x_k))>||F(x_k)||^{\tau}$}
   {  ~~~~\While{$minsp(\nabla ^2f(x_k)+\delta _j||F(x_k)||)<\kappa ||F(x_k)||$}{j:=j+1}
   ~~~$A_k:=\nabla ^2f(x_k)+\delta _j||F(x_k)||\times Id$\\
  {\bf else}\\
  ~~~~\While{$minsp(\nabla ^2f(x_k)+\delta _j||F(x_k)||^{\tau})<\kappa ||F(x_k)||^{\tau}$}{j:=j+1}
   ~~~$A_k:=\nabla ^2f(x_k)+\delta _j||F(x_k)||^{\tau}\times Id$}
  }
$v_k:=A_k^{-1}H(x_k)^{\intercal}F(x_k)=pr_{A_k,+}v_k+pr_{A_k,-}v_k$\\
 $w_k:=pr_{A_k,+}v_k-pr_{A_k,-}v_k$\\
$\widehat{w_k}:=w_k/\max \{1,||w_k||\}$\\
$\gamma :=1$\\
 \If{$H(x_k)^{\intercal}F(x_k)\neq 0$}{
   \While{$f(x_k-\gamma \widehat{w_k})-f(x_k)>-\gamma <\widehat{w_k},H(x_k)^{\intercal}F(x_k)>$}{$\gamma =\gamma /2$}}

$x_{k+1}:=x_k-\gamma \widehat{w_k}$
   }
  \caption{Backtracking New Q-Newton's method SE} \label{table:alg3}
\end{algorithm}
}
\medskip

\medskip
{\color{blue}
 \begin{algorithm}[H]
\SetAlgoLined
\KwResult{Find a zero of $F:\mathbb{R}^m\rightarrow \mathbb{R}^{m'}$}
Given: $0<\delta_0,\delta_1$  and $0<\tau $;\\
Define: $H(x):=JF(x)$ and $f(x)=||F(x)||^2$;\\
Define: $\kappa :=|\delta _1-\delta _0|/2$;\\
Initialization: $x_0\in \mathbb{R}^m$\;
 \For{$k=0,1,2\ldots$}{ 
    $j=0$\\
  \If{$F(x_k)\neq 0$}{
  \If{$minsp(H(x_k)^{\intercal}H(x_k))>||F(x_k)||^{\tau}$}
   {$A_k:=H(x_k)^{\intercal}H(x_k)+\delta _0||F(x_k)||\times Id$\\
  {\bf else}\\
   $A_k:=H(x_k)^{\intercal}H(x_k)+\delta _1||F(x_k)||^{\tau}\times Id$}
  }
 $w_k:=A_k^{-1}H(x_k)^{\intercal}F(x_k)$\\
$\widehat{w_k}:=w_k/\max \{1,||w_k||\}$\\
$\gamma :=1$\\
 \If{$H(x_k)^{\intercal}F(x_k)\neq 0$}{
   \While{$f(x_k-\gamma \widehat{w_k})-f(x_k)>-\gamma <\widehat{w_k},H(x_k)^{\intercal}F(x_k)>$}{$\gamma =\gamma /2$}}

$x_{k+1}:=x_k-\gamma \widehat{w_k}$
   }
  \caption{Backtracking Levenberg-Marquardt's algorithm}  \label{table:alg4}
\end{algorithm}
}
\medskip

We have the following result. Note that by definition, for the $A_k$ in either algorithm, we have $minsp(A_k)\geq ||F(x_k)||^{\tau }$, and near a non-degenerate root of $F(x)$ we have $A_k=H(x_k)^{\intercal}H(x_k)+O(||F(x_k)||)$. 

\begin{theorem}

Let $F:\mathbb{R}^m\rightarrow \mathbb{R}^{m'}$ be a $C^1$ function. Define $H(x)=JF(x)$ and $f(x)=||F(x)||^2$. Let $x_0\in \mathbb{R}^m$ be an initial point, and $\{x_n\}$ the corresponding constructed sequence from New Q-Newton's method Backtracking SE or Levenberg-Marquardt M.  

0) (Descent property) $f(x_{n+1})\leq f(x_n)$ for all n. 

1) If $x_{\infty}$ is a {\bf cluster point} of $\{x_n\}$, then $H(x_{\infty})^{\intercal}F(x_{\infty})=0$. That is, $x_{\infty}$ is a {\bf critical point} of $f$.

2) If $0<\tau <1$: If $f$ has at most countably many critical points, then either $\lim _{n\rightarrow\infty}||x_n||=\infty$ or $\{x_n\}$ converges to a critical point of $f$. Moreover, if $f$ has compact sublevels, then only the second alternative happens. 

3) If $x_n$ converges to $x_{\infty}$ which is a non-degenerate zero  of $F$ (that is, if $H(x)$ is invertible at $x_{\infty}$), then the rate of convergence is quadratic. 

4) If $0<\tau <1$: (Capture theorem) If $x_{\infty}'$ is an isolated zero of $F$, then for initial points $x_0'$ close enough to $x_{\infty}'$, the sequence $\{x_n'\}$  constructed by New Q-Newton's method Backtracking SE will converge to $x_{\infty}'$. 

\label{Theorem7}\end{theorem}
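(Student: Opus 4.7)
The plan is to mirror the proofs of Theorems \ref{Theorem1} and \ref{Theorem2}, now applied to $f(x)=\|F(x)\|^2$ so that $\nabla f(x)=2H(x)^{\intercal}F(x)$. I would first establish in both algorithms the analogues of Lemmas \ref{Lemma0}--\ref{Lemma2}. Lemma \ref{Lemma0} applied to $\nabla^2 f(x_k)$ (for Algorithm \ref{table:alg3}) or to $H(x_k)^{\intercal}H(x_k)$ (for Algorithm \ref{table:alg4}) shows the inner while loop terminates with $minsp(A_k)\ge \kappa\|F(x_k)\|$ or $\kappa\|F(x_k)\|^{\tau}$, depending on the branch. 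The eigenvector decomposition used in Lemma \ref{Lemma1} carries over to give $<w_k,H(x_k)^{\intercal}F(x_k)>\;>0$ with the same two-sided comparison to $\|w_k\|^2$ and $\|H(x_k)^{\intercal}F(x_k)\|^2$; positive-definiteness of $A_k^{-1}$ is automatic in Algorithm \ref{table:alg4}, while in Algorithm \ref{table:alg3} the sign-flipping of the negative eigenspace does the job. A Taylor expansion of $f$ then shows the Armijo while loop terminates in finite time, and part 0 is immediate from the acceptance criterion.

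For part 1, I would argue by contradiction exactly as in part 2 of Theorem \ref{Theorem1}. Supposing a subsequence $x_{n_k}\to x_\infty$ with $H(x_\infty)^{\intercal}F(x_\infty)\neq 0$, on a small closed ball $\mathcal{B}$ about $x_\infty$ one has $\|H(x)^{\intercal}F(x)\|\ge \epsilon>0$, and the analogue of Lemma \ref{Lemma2} supplies uniform upper/lower bounds on $sp(A_k)$, $minsp(A_k)$, and on the Armijo step size $\gamma_{n_k}$. Since $\{f(x_n)\}$ is monotone and bounded below by $0$, telescoping together with the Armijo inequality forces $<\widehat{w_{n_k}},H(x_{n_k})^{\intercal}F(x_{n_k})>\;\to 0$; but the descent-direction comparison lower-bounds this by a positive multiple of $\|H(x_{n_k})^{\intercal}F(x_{n_k})\|^2$ (after verifying $w_{n_k}\sim \widehat{w_{n_k}}$ from boundedness of $w_{n_k}$ on $\mathcal{B}$), giving the contradiction.

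For part 2 (assuming $0<\tau<1$), the plan is to transport the projective-metric argument from part 3 of Theorem \ref{Theorem1}. I would show $d(x_n,x_{n+1})\to 0$ in the projective metric of $\mathbb{P}^m$, treating the case $\|x_{n_k}\|\to \infty$ trivially from the definition of $d$ and the case $x_{n_k}\to x_\infty$ by proving $\widehat{w_{n_k}}\to 0$; the latter uses part 1 together with $\|w_k\|\le \|H(x_k)^{\intercal}F(x_k)\|/minsp(A_k)$ and $minsp(A_k)\ge \kappa\|F(x_k)\|^{\tau}$ with $\tau<1$. Combined with part 1 and the standard bifurcation argument of \cite{truong-nguyen2} using the countability of critical points, this yields the stated dichotomy; the compact-sublevel hypothesis then rules out $\|x_n\|\to\infty$.

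For part 3, near a non-degenerate zero $x_\infty$ of $F$ (so $H(x_\infty)$ is invertible and $m=m'$), the identity $\nabla^2 f(x)=2H(x)^{\intercal}H(x)+2\sum_i F_i(x)\nabla^2 F_i(x)$ gives $\nabla^2 f(x)\to 2H(x_\infty)^{\intercal}H(x_\infty)\succ 0$; eventually $minsp(\nabla^2 f(x_k))>\|F(x_k)\|^{\tau}$ and, up to a factor of $2$, $A_k=H(x_k)^{\intercal}H(x_k)+O(\|F(x_k)\|)$ with $pr_{A_k,-}=0$, hence $w_k=v_k$ coincides up to higher-order terms with the Newton--Raphson step $H(x_k)^{-1}F(x_k)$. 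A short Taylor computation shows Armijo accepts $\gamma=1$ for $n$ large, and then expanding $F$ around $x_\infty$ gives $x_{n+1}-x_\infty=O(\|x_n-x_\infty\|^2)$. For the capture theorem in part 4, I would pick a neighborhood $U\ni x_\infty'$ containing no other zero of $F$ with $\inf_{\partial U}f>0$, and choose $x_0'$ with $f(x_0')<\inf_{\partial U}f$; the descent property confines $x_n'$ to $U$, and part 2 produces convergence to some critical point $z^*\in\overline{U}$ of $f$. The main obstacle is concluding $z^*=x_\infty'$ when $H(x_\infty')$ may be singular, since $f$ could a priori have spurious critical points arbitrarily close to $x_\infty'$; I would handle this by shrinking $U$ so that the only critical point of $f$ in $U$ is $x_\infty'$ itself, using real-analyticity when available or a local Lojasiewicz-type lower bound on $\|H(x)^{\intercal}F(x)\|$ off the zero locus of $F$ inside a sufficiently small neighborhood.
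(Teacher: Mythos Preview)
Your plan tracks the paper's proof closely. For parts 0--2 the paper literally says ``follow exactly as in for Backtracking New Q-Newton's method,'' i.e.\ the Lemma \ref{Lemma0}--\ref{Lemma2} machinery plus the projective-metric argument of Theorem \ref{Theorem1}, which is precisely what you outline. For part 3 the paper's proof is the one-sentence observation that near a non-degenerate zero the update differs from Newton's method by $O(\|F(x_k)\|)$; your version unpacks this (identifying $A_k$ with $H(x_k)^{\intercal}H(x_k)+O(\|F(x_k)\|)$, checking $pr_{A_k,-}=0$ and that Armijo accepts $\gamma=1$) but is the same argument.

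For part 4 there is a genuine difference in level of detail. The paper's entire proof is ``this is well known for optimization algorithms having the descent property in part 0,'' i.e.\ it appeals to the classical capture theorem for strict local minima of descent methods and says nothing further. You instead try to run the argument explicitly and correctly flag the obstacle: an \emph{isolated zero} of $F$ is a strict local minimum of $f=\|F\|^2$, but under the bare $C^1$ hypothesis it need not be an isolated \emph{critical point} of $f$, so trapping the sequence in a small ball and invoking part 1 does not by itself force the limit to be $x_\infty'$. Your proposed fix routes through part 2, which imports the extra hypothesis of countably many critical points; your fallback to real-analyticity or a Lojasiewicz bound likewise adds assumptions not in the statement. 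This is a fair criticism of your write-up, but note that the paper's one-line proof does not address this subtlety either --- it simply defers to folklore. So your approach is the same as the paper's, only more explicit about a point the paper glosses over.
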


\begin{proof}

Parts 0, 1 and 2 follow exactly as in  for Backtracking New Q-Newton's method. 

Part 3: this follows because near $x_{\infty}$ then the update rule for either algorithm is different from the usual Newton's method only in $O(||F(x_k)||)$, which will still has quadratic rate of convergence. 

Part 4: this is well known for optimization algorithms having the descent property in part 0.

\end{proof}

Next, we discuss the avoidance of saddle point. Recall that a point $x^*$ is a generalised saddle point of a function $f$ if $\nabla f(x^*)=0$, and moreover $\nabla ^2f(x^*)$ has at least one negative eigenvalue. Note that if $x^*$ is a generalised saddle point of $f(x)=||F(x)||^2$, then $x^*$ cannot be a root of $F(x)=0$. In particular, since $\nabla f(x)/2=H(x)^{\intercal}F(x)$, we have that $H(x^*)$ is singular, and hence $A(x)=H(x)^{\intercal}H(x)+\delta _1||F(x)||^{\tau}$ near $x^*$. 

We consider the Backtracking Levenberg-Marquardt's method first, which is more complicated to deal with. The main reason is that the main term $H(x)^{\intercal}H(x)$ is not the same as $\nabla ^2f(x)$. Note that the dynamics of Backtracking Levenberg-Marquardt's method is $x\mapsto G(x)=x-\gamma (x)(H(x)^{\intercal}H(x)+\delta _1||F(x)||^{\tau})^{-1}.\nabla f(x)/2$, and it is easy to compute that $G(x)=(H(x)^{\intercal}H(x)+\delta _1||F(x)||^{\tau})^{-1}.\nabla f(x)/2$ is $C^1$ near $x^*$, and $\nabla G(x^*)=(H(x^*)^{\intercal}H(x^*)+\delta _1||F(x^*)||^{\tau})^{-1}.\nabla ^2f(x^*)/2$. Since  $H(x^*)^{\intercal}H(x^*)+\delta _1||F(x^*)||^{\tau}$ is strictly positive definite near $x^*$, it follows that the term $(H(x^*)^{\intercal}H(x^*)+\delta _1||F(x^*)||^{\tau})^{-1}.\nabla ^2f(x^*)/2$ also has at least one negative eigenvalue. Therefore, if $\gamma (x)$ varies $C^1$ near $x^*$ (for example, if it is a constant), then one has a local Stable-Central manifold for $x^*$ for the dynamics of Backtracking Levenberg-Marquardt's method, and then can use the results in the previous Subsection and \cite{truong} to show (when $\delta _0, \delta _1$ are randomly chosen from beginning) that Backtracking Levenberg-Marquardt's algorithm can globally avoid $x^*$. 

There is one way to make $\gamma(x)$ to be constant near $x^*$, that is to choose $\gamma (x)$ by a more sophisticated manner. The main idea in \cite{truong, truongnew}, used for Backtracking line search for gradient descent, is to find a continuous quantity $R(x)$, so that if $\gamma <R(x)$ then Armijo's condition is satisfied. Then, if one chooses $\gamma (x)$ in a sequence $\{\beta ^n: ~n=0,1,2,\ldots \}$, where $0<\beta <1$ is randomly chosen, then one can make sure (in case $x^*$ is an isolated saddle point) that $\gamma (x)$ is constant near $x^*$. In the case at hand, such an $R(x)$ can be bounded from the quantities involving $F(x)$ at $x^*$. The case of non-isolated saddle points can be treated similarly, by using Lindelof's lemma (which is first used in \cite{panageas-piliouras} for the usual Gradient descent method). Therefore, we have the following result. 

\begin{theorem}
If $F:\mathbb{R}^m\rightarrow \mathbb{R}^{m'}$ is $C^2$, and $f=||F||^2$, then by choosing $\delta _0,\delta _1$ randomly, as well as choosing the learning rate $\gamma (x)$ in a sequence $\{\beta ^n: ~n=0,1,2,\ldots \}$ (where $0<\beta <1$ is randomly chosen) by the manner in \cite{truong4, truongnew}, then for a random initial point the sequence $\{x_n\}$ constructed by Backtracking Levenberg-Marquardt's algorithm  cannot converge to a generalised saddle point. 

\label{Theorem8}\end{theorem}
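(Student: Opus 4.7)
The plan is to reduce the statement, as sketched in the preceding discussion, to an application of the stable-central manifold theorem for a $C^1$ local diffeomorphism, combined with Lindelöf's lemma to pass from a local statement to a global one. First I would fix a generalised saddle point $x^*$ of $f=\|F\|^2$ and observe that $F(x^*)\neq 0$: otherwise $\nabla^2 f(x^*)=2H(x^*)^{\intercal}H(x^*)$ would be positive semidefinite, contradicting the assumption that $\nabla^2 f(x^*)$ has a negative eigenvalue. Hence in a small neighbourhood $V$ of $x^*$ one has $\|F(x)\|^{\tau}\geq c>0$, so that $A(x)=H(x)^{\intercal}H(x)+\delta_1\|F(x)\|^{\tau}\,Id$ is uniformly positive definite and $C^1$; consequently the unconstrained direction $w(x)=A(x)^{-1}H(x)^{\intercal}F(x)$ is $C^1$ on $V$ with $w(x^*)=0$ (because $H(x^*)^{\intercal}F(x^*)=\tfrac12\nabla f(x^*)=0$), and in particular $\widehat{w(x)}=w(x)$ on a smaller neighbourhood.

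Next I would make $\gamma$ locally constant, following the philosophy of \cite{truong, truongnew}. Using Taylor expansion and the lower bound $\langle w(x),\tfrac12\nabla f(x)\rangle\geq c'\|w(x)\|^2$ coming from $A(x)\succ 0$, one constructs a continuous function $R(x)>0$ on $V$ such that Armijo's condition is satisfied for every $\gamma\in(0,R(x))$. The value $R(x^*)>0$ is fixed once $x^*$ is fixed; for $\beta\in(0,1)$ outside the countable exceptional set $\{\beta:\beta^n=R(x^*)\text{ for some }n\}$, there is a unique $n_0$ with $\beta^{n_0}<R(x^*)<\beta^{n_0-1}$, and by continuity $\gamma(x)\equiv\beta^{n_0}$ on a neighbourhood $U\subset V$. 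Since the $\beta$ is chosen randomly, this exclusion is almost sure; uniformity across $x^*$ in a compact piece of the saddle set is handled by Lindelöf below.

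On $U$ the update rule becomes the $C^1$ map $G(x)=x-\beta^{n_0}w(x)$, and because $w(x^*)=0$ one computes
\begin{equation*}
DG(x^*)=Id-\beta^{n_0}A(x^*)^{-1}\cdot\tfrac12\nabla^2 f(x^*).
\end{equation*}
Since $A(x^*)$ is symmetric positive definite, Sylvester's law of inertia applied to the congruence $A(x^*)^{-1/2}\nabla^2 f(x^*)A(x^*)^{-1/2}$ shows $A(x^*)^{-1}\nabla^2 f(x^*)$ has the same signature as $\nabla^2 f(x^*)$; in particular it has a negative eigenvalue $-\mu<0$, so $DG(x^*)$ has an eigenvalue $1+\tfrac12\beta^{n_0}\mu>1$. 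By the stable-central manifold theorem for a $C^1$ map with a partially hyperbolic fixed point (as used in \cite{truong-etal, truongnew}), the local stable set at $x^*$ is contained in a $C^1$ submanifold of dimension $<m$, hence has Lebesgue measure zero in $U$. Pulling back by the iterates of $G$ (which is a local diffeomorphism on $U$ for $\beta$ outside another countable exceptional set making $DG(x^*)$ singular), the full basin of attraction of $x^*$ is a countable union of measure zero sets.

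To upgrade this local statement to all generalised saddle points simultaneously, I would invoke Lindelöf's lemma, exactly as in \cite{panageas-piliouras}: the set of generalised saddle points is second countable, the neighbourhoods $U_{x^*}$ above form an open cover, so extract a countable subcover $\{U_{x^*_j}\}$ and take the union of the corresponding (measure-zero) basins. I expect the main obstacle to be the \emph{locally constant} step for $\gamma$: one must produce a genuinely continuous $R(x)$ and verify that for random $\beta$ the exceptional sets (both $\{R(x^*_j)=\beta^n\}$ and $\{DG(x^*_j)\text{ singular}\}$), taken over the countable cover $\{x^*_j\}$, remain measure zero in the parameter space. The rest — the eigenvalue computation, application of the stable manifold theorem, and Lindelöf globalization — is essentially a transcription of the argument already carried out for Backtracking gradient descent and for New Q-Newton's method in the cited references.
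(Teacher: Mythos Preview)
Your proposal is correct and follows essentially the same approach as the paper, whose ``proof'' is the discussion immediately preceding the theorem: show $A(x)=H(x)^{\intercal}H(x)+\delta_1\|F(x)\|^{\tau}Id$ is positive definite and $C^1$ near a generalised saddle $x^*$, make $\gamma(x)$ locally constant via a continuous threshold $R(x)$ and a random $\beta$, compute $DG(x^*)=Id-\gamma A(x^*)^{-1}\cdot\tfrac12\nabla^2f(x^*)$ and note it has an eigenvalue $>1$, then invoke the stable--central manifold theorem and Lindel\"of's lemma. You have in fact supplied more detail than the paper (the Sylvester inertia argument, the explicit description of the exceptional $\beta$-sets), but the skeleton is identical.
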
   

However, it is still preferable to show avoidance of saddle points for the original version of Backtracking Levenberg-Marquardt's algorithm here, since it is simpler. It is expected that settling this question - at least locally near the saddle point - is the same as settling the question of whether the original version of Armijo's Backtracking line search for Gradient descent can avoid saddle points, and the latter question is still open. (As mentioned, a more sophisticated choice of learning rate for Backtracking line search for Gradient descent can avoid saddle points, see \cite{truong4, truongnew}.) Here, by using the ideas  in the previous Subsection, we can show that Levenberg-Marquardt Backtracking M can avoid saddle points of a special type, which is described next.

Again, let $f(x)=||F(x)||^2$ and $x^*$ a saddle point of $f$. Then $\nabla ^2f(x^*)/2 =H(x^*)^{\intercal}H(x^*)+\nabla ^2F(x^*).F(x^*)$ has at least $1$ negative eigenvalue. Since $H(x^*)^{\intercal}H(x^*)$ is semi-positive definite, the matrix $\nabla ^2F(x^*).F(x^*)$ also has at least one negative eigenvalue. The special generalised saddle points we concern are: 

{\bf Strong generalised saddle points:} $x^*$ is a strong generalised saddle point of $f(x)=||F(x)||^2$ if it is a generalised saddle point of $f$, and moreover $\nabla ^2F(x^*).F(x^*)$ is negative definite.  

\begin{theorem}
Let $F:\mathbb{R}^m\rightarrow \mathbb{R}^{m'}$ be $C^2$ and $f(x)=||F(x)||^2$. Assume that $\delta _0,\delta _1$ are chosen randomly. If $x_0$ is a random initial point, and $\{x_n\}$ is the sequence  constructed from Backtracking Levenberg-Marquardt's algorithm, then $\{x_n\}$ cannot converge to a strong generalised saddle point $x^*$ of $f$.  

\label{Theorem9}\end{theorem}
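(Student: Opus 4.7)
The plan is to reduce, inside a small neighborhood $U$ of $x^*$, the Backtracking Levenberg-Marquardt dynamics to a deterministic $C^1$ map whose linearisation at $x^*$ has a strict expanding direction, and then invoke a stable-manifold / Lindel\"of-covering argument in the style of \cite{truong-etal, truongnew, panageas-piliouras}.

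First I would set up the local picture. Since $\nabla^2F(x^*)\cdot F(x^*)$ is negative definite it is in particular nonzero, so $F(x^*)\neq 0$ and $\|F(x)\|>0$ on a neighborhood $U$. By shrinking $U$ we may assume a single branch of the if-else in Algorithm~\ref{table:alg4} is active throughout $U$ (the boundary case $minsp(H^\intercal H)(x^*)=\|F(x^*)\|^\tau$ is codimension one in $x^*$ and handled by a perturbation), giving a single $C^1$ formula $A(x)=H(x)^\intercal H(x)+\delta_j\|F(x)\|^{\tau_j}Id$ with $A(x^*)$ strictly positive definite (since $\|F(x^*)\|^{\tau_j}>0$). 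From $\nabla f(x^*)/2=H(x^*)^\intercal F(x^*)=0$ we obtain $w(x^*)=A(x^*)^{-1}\cdot 0=0$, so by continuity $\|w(x)\|<1$ and $\widehat{w(x)}=w(x)$ on a possibly smaller $U$.

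Next I would verify that Armijo's condition accepts the full step $\gamma=1$ on $U$. Taylor's theorem (with $o(\|w\|^2)$ remainder uniform on the compact closure of $U$ because $F$ is $C^2$), the defining identity $A(x)w(x)=H(x)^\intercal F(x)$, and the formula $\nabla^2 f=2H^\intercal H+2\nabla^2F\cdot F$ together give
\begin{eqnarray*}
f(x-w(x))-f(x)=-\langle w,H^\intercal F\rangle-\delta_j\|F\|^{\tau_j}\|w\|^2+\langle(\nabla^2F(x)\cdot F(x))w,w\rangle+o(\|w\|^2).
\end{eqnarray*}
This is the step where the strong hypothesis is decisive: negative definiteness of $\nabla^2F(x^*)\cdot F(x^*)$ yields $\langle(\nabla^2F(x)\cdot F(x))w,w\rangle\leq -c\|w\|^2$ uniformly on $U$ for some $c>0$, absorbing the $o(\|w\|^2)$ term. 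Hence the right-hand side is $\leq -\langle w,H^\intercal F\rangle$, no backtracking occurs, and the local dynamics coincides with the $C^1$ map $G(x):=x-w(x)$ having fixed point $x^*$.

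Third I would compute $\nabla G(x^*)=Id-A(x^*)^{-1}\cdot\tfrac12\nabla^2 f(x^*)$. Positive definiteness of $A(x^*)$ makes $A(x^*)^{-1}\cdot\tfrac12\nabla^2 f(x^*)$ similar to the symmetric matrix $A(x^*)^{-1/2}\cdot\tfrac12\nabla^2 f(x^*)\cdot A(x^*)^{-1/2}$, which has the same signature as $\nabla^2 f(x^*)$ by Sylvester's law; the generalised-saddle hypothesis supplies at least one strictly negative eigenvalue, so $\nabla G(x^*)$ has at least one eigenvalue $>1$. For random $\delta_0,\delta_1$, the Lebesgue-null parameter set making an eigenvalue of $\nabla G(x^*)$ have modulus exactly $1$ is avoided, so $\nabla G(x^*)$ is hyperbolic with a nontrivial unstable subspace. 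The local stable manifold theorem then yields a $C^1$ submanifold $W^s_{loc}(x^*)\subset U$ of positive codimension containing every point in $U$ whose $G$-orbit converges to $x^*$. Covering the (possibly uncountable) set of strong generalised saddles by countably many such neighborhoods via Lindel\"of's lemma, and pulling back by the Borel-measurable iteration map as in \cite{panageas-piliouras,truongnew}, the set of $x_0$ whose trajectory converges to some strong generalised saddle point is a countable union of Lebesgue-null sets, hence itself null.

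The hard part is step two: showing the full-step Armijo acceptance. Without negative definiteness of $\nabla^2F(x^*)\cdot F(x^*)$, the quadratic form $\langle(\nabla^2F\cdot F)w,w\rangle$ can be positive along some direction and swamp the $-\delta_j\|F\|^{\tau_j}\|w\|^2$ term, forcing the backtracking selector $\gamma(x)$ to activate; that selector is only piecewise continuous in $x$ and destroys the $C^1$ smoothness needed for the stable manifold theorem. This is precisely the obstruction sidestepped by the more elaborate learning-rate rule underlying Theorem~\ref{Theorem8}.
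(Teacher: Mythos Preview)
Your argument is correct and is essentially the paper's own proof, with the stable-manifold step spelled out rather than deferred to the discussion preceding Theorem~\ref{Theorem8}: both show, via the same Taylor estimate (the paper organises it as $\langle[H^\intercal H+\nabla^2F\cdot F]w,w\rangle\leq\langle[H^\intercal H+\delta_1\|F\|^\tau]w,w\rangle=\langle H^\intercal F,w\rangle$, which is your computation rearranged), that the strong-saddle hypothesis forces Armijo to accept $\gamma=1$ near $x^*$, reducing the local dynamics to a $C^1$ map with an expanding direction. Two small points: your ``handled by a perturbation'' for the branch-boundary case is vague since $x^*$ is fixed---the paper instead observes that $H(x^*)^\intercal F(x^*)=0$ with $F(x^*)\neq0$ makes $H(x^*)$ singular, so $minsp(H^\intercal H)(x^*)=0<\|F(x^*)\|^\tau$ and the else-branch is always selected---and hyperbolicity is unnecessary, as a single eigenvalue of $\nabla G(x^*)$ with modulus $>1$ already yields a center-stable manifold of positive codimension, which is all the pullback argument needs.
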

\begin{proof}
As mentioned before the statement of Theorem \ref{Theorem8}, it suffices to show that for $x$ close to $x^*$, then the learning rate $\gamma (x)=1$.  Denote, as usual,  $w(x)=(H(x)^{\intercal}H(x)+\delta _1||F(x)||^{\tau})^{-1}.H(x)^{\intercal}.F(x)$. Note that $||w(x)||\sim ||H(x)^{\intercal}.F(x)||$ Then, by Taylor's expansion we have
\begin{eqnarray*}
f(x-w(x))-f(x)&=&-2<w(x),H(x)^{\intercal}.F(x)>\\
&&+<[H(x)^{\intercal}H(x)+\nabla ^2F(x).F(x)].w(x),w(x)>+o(||w(x)||^2)
\end{eqnarray*}
 
Since $\nabla ^2F(x).F(x)$ is negative definite when $x$ is close to $x^*$, and since $\delta _1>0$, we obtain 
\begin{eqnarray*}
<[H(x)^{\intercal}H(x)+\nabla ^2F(x).F(x)].w(x),w(x)>&\leq&<[H(x)^{\intercal}H(x)+\delta _1||F(x)||^{\tau }]w(x), w(x)>\\
&=&<H(x)^{\intercal}.F(x),w(x)>.  
\end{eqnarray*}

Hence, for $x$ near $x^*$, we have as wanted: $f(x-w(x))-f(x)\leq - 1/3<w(x),H(x)^{\intercal}.F(x)>$. 
\end{proof}

Note that when $m=1$, then a saddle point of $f$ is also a strong generalised saddle point. On the other hand, for $m=2$, the two notions are different, already for the case $f=|g(z)|^2$ where $g$ is a univariate holomorphic function (see \cite{truong-etal}). 

Concerning avoidance of saddle points, the next result shows that currently Backtracking New Q-Newton's method SE has better theoretical guarantees than Backtracking Levenberg-Marquardt's method. The proof is similar to the proof of Theorem \ref{Theorem9}.

\begin{theorem}
Let $F:\mathbb{R}^m\rightarrow \mathbb{R}^{m'}$ be $C^2$ and $f(x)=||F(x)||^2$. Assume that $\delta _0,\delta _1$ are chosen randomly. If $x_0$ is a random initial point, and $\{x_n\}$ is the sequence  constructed from Backtracking New Q-Newton's method SE, then $\{x_n\}$ cannot converge to a generalised saddle point $x^*$ of $f$.  
\label{Theorem10}\end{theorem}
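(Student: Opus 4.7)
The plan is to adapt the proof of Theorem \ref{Theorem9} while exploiting the sign--flipping construction $w = pr_{A,+}v - pr_{A,-}v$ which is present in Backtracking New Q-Newton's method SE but absent in Backtracking Levenberg--Marquardt; this is precisely what removes the ``strong'' saddle hypothesis needed there. The central intermediate claim I aim at is that on some open neighbourhood $U$ of $x^*$ the backtracking loop in Algorithm \ref{table:alg3} terminates at the initial trial step $\gamma = 1$. Once this is established, on $U$ the iteration reduces to the pure update $x_{k+1} = x_k - w_k$, and the saddle--avoidance argument already used for New Q-Newton's method in the proof of Theorem \ref{Theorem1} part 4) (via the local center--stable manifold results of \cite{truong-etal, truongnew}) transfers essentially verbatim.

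To obtain $\gamma(x) = 1$ on $U$, I first note that because $x^*$ is a generalised saddle of $f$ it cannot be a zero of $F$, so $\|F(x^*)\|>0$. Combined with Lemma \ref{Lemma0} this makes both $sp(A(x))$ and $1/minsp(A(x))$ uniformly bounded near $x^*$, and also makes the selector index $j$ and the two--branch case distinction of Algorithm \ref{table:alg3} stabilise to a single smooth formula $A(x) = \nabla^2 f(x) + \delta_{j_0}\|F(x)\|^{?}\, Id$ with $\delta_{j_0} > 0$ throughout $U$. In particular $w(x) \to 0$ and $\widehat{w(x)} = w(x)$ for $x$ close to $x^*$. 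Diagonalising $A(x)$ in an orthonormal eigenbasis and computing exactly as in the proof of Theorem \ref{Theorem1} part 4) gives the eigenvalue--flip identity $\langle A(x) w(x), w(x)\rangle \leq \langle w(x), H(x)^{\intercal} F(x)\rangle$, hence $\langle \nabla^2 f(x)\, w, w\rangle \leq \langle w, H^{\intercal}F\rangle$ by positivity of $\delta_{j_0}$. A second--order Taylor expansion together with $\nabla f = 2 H^{\intercal}F$ then yields
\begin{eqnarray*}
f(x - w(x)) - f(x) &\leq& -\tfrac{3}{2} \langle w, H^{\intercal}F\rangle + o(\|w\|^2),
\end{eqnarray*}
and since $\langle w, H^{\intercal} F\rangle \sim \|w\|^2$ uniformly on $U$ (by the spectral bounds above), the $o$--remainder is dominated by the slack $\tfrac{1}{2}\langle w, H^{\intercal}F\rangle$ once $x$ is close enough to $x^*$. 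This produces the Armijo inequality $f(x - w) - f(x) \leq - \langle w, H^{\intercal}F\rangle$ needed to accept $\gamma = 1$.

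With $\gamma \equiv 1$ on $U$, the local dynamics $T(x) = x - w(x)$ is $C^1$ there (the projections $pr_{A(x),\pm}$ admit a smooth contour--integral representation once the While loop has stabilised), and its Jacobian at $x^*$ inherits a real eigenvalue of modulus strictly greater than $1$ from the negative eigendirection of $\nabla^2 f(x^*)$. I then invoke the local center--stable manifold theorem and the measure--zero argument of \cite{truong-etal, truongnew}, together with the Lindel\"of--cover trick of \cite{panageas-piliouras} to globalise from a single saddle to the whole set of saddles, to conclude that the set of initial conditions $x_0$ from which $\{x_n\}$ converges to a generalised saddle point has Lebesgue measure zero, which is what is claimed.

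The main obstacle I expect is the verification that $T$ is genuinely $C^1$ across possible jumps of the index $j$ and across the two branches defining $A_k$ in Algorithm \ref{table:alg3}. The stabilisation observation in the second paragraph handles this at non--degenerate saddles; the residual degenerate subcases (e.g.\ when $\nabla^2 f(x^*)$ has a zero eigenvalue) can, if needed, be treated by the random--perturbation--of--$\delta_j$ and integral--kernel arguments already used in the proof of Theorem \ref{Theorem1} part 4) and in \cite{truong-etal, truongnew}.
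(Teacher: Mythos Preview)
Your proposal is correct and follows essentially the same approach as the paper, which in fact gives only the one-line proof ``similar to the proof of Theorem \ref{Theorem9}''. You have correctly identified and carried out the key modification needed to pass from Theorem \ref{Theorem9} to Theorem \ref{Theorem10}: replacing the strong-saddle hypothesis (negative definiteness of $\nabla^2 F(x^*)\cdot F(x^*)$) by the eigenvalue sign-flip identity $\langle A(x)w(x),w(x)\rangle \le \langle w(x), H(x)^{\intercal}F(x)\rangle$ inherited from the $pr_{A,+}-pr_{A,-}$ construction, which is exactly the computation from the proof of Theorem \ref{Theorem1} part 4) and yields $\gamma(x)=1$ near $x^*$ without any extra sign assumption on the Hessian of $F$.
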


Like the case of Backtracking New Q-Newton's method, under the assumption on Lojasiewicz inequality (around critical points),  one can prove convergence for Backtracking New Q-Newton's method SE. On the other hand, we do not know if such a convergence can be proven also for Backtracking Levenberg-Macquardt's algorithm. 

\begin{theorem} Assume that $F:\mathbb{R}^m\rightarrow \mathbb{R}^{m'}$ is $C^1$ so that $f=||F||^2$ satisfies the Lojasiewicz gradient inequality. Let $\{x_n\}$ be a sequence constructed by Backtracking New Q-Newton's method SE. Assume also that $0<\tau < 1$. 

1) Assume that for all critical points $x^*$ of $f$, we have $ \mu (x^*)\times (1+\tau ) <1$. Then either $\lim _{n\rightarrow\infty}||x_n||=\infty$ or $\{x_n\}$ converges to a critical point of $f$. 

2) If $F$ is a polynomial map, then the condition in part 1) is satisfied, provided $\tau >0$ is small enough. 
\label{Theorem11}\end{theorem}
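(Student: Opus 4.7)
\emph{Plan.} The plan is to run a Kurdyka--Lojasiewicz convergence argument in the spirit of the proof of Theorem \ref{Theorem2} part 2, adapted to Algorithm \ref{table:alg3}. First I would record the appropriate analogues of Lemmas \ref{Lemma0}, \ref{Lemma1}, \ref{Lemma2} for the SE algorithm; diagonalising $A_k$ in an orthonormal eigenbasis exactly as in Lemma \ref{Lemma1} yields
\begin{eqnarray*}
<w_k,\nabla f(x_k)>\;\geq\; \frac{minsp(A_k)}{sp(A_k)}\,||w_k||\cdot ||\nabla f(x_k)||,\qquad ||w_k||\;\leq\; \frac{||\nabla f(x_k)||}{minsp(A_k)}.
\end{eqnarray*}
By construction of Algorithm \ref{table:alg3}, $minsp(A_k)\geq \kappa\,||F(x_k)||^\tau$ in both algorithmic branches (replacing Lemma \ref{Lemma0}), and $sp(A_k)$ is uniformly bounded above on any bounded region. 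Feeding these bounds into the display gives the \emph{fundamental inequality}
\begin{eqnarray*}
<w_k,\nabla f(x_k)>\;\geq\; C'\,||w_k||\cdot ||\nabla f(x_k)||\cdot ||F(x_k)||^\tau .
\end{eqnarray*}

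Now assume $\{x_n\}$ does not drift off to infinity, so it has a cluster point $x^*$, which is a critical point of $f$ by Theorem \ref{Theorem7}. Fix $\mu$ just above $\mu(x^*)$ with $\mu(1+\tau)<1$, and a neighbourhood $U$ of $x^*$ on which $|f(x)-f(x^*)|^\mu\leq C\,||\nabla f(x)||$. I would split into two geometrically distinct cases. In \textbf{Case A}, $F(x^*)=0$, so $f(x^*)=0$ and $||F(x)||^\tau=(f(x)-f(x^*))^{\tau/2}$; substituting Lojasiewicz and this identity into the fundamental inequality yields
\begin{eqnarray*}
<w_k,\nabla f(x_k)>\;\geq\; C''\,||w_k||\,(f(x_k)-f(x^*))^{\mu+\tau/2}.
\end{eqnarray*}
The elementary identity $(1+\tau)(2-\tau)=2+\tau-\tau^2>2$ for $0<\tau<1$ shows that $\mu<1/(1+\tau)$ forces $\mu+\tau/2<1$, so the exponent $\theta:=\mu+\tau/2$ lies in $(0,1)$. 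In \textbf{Case B}, $F(x^*)\neq 0$, so $||F(x_k)||$ is bounded below near $x^*$ and the fundamental inequality gives $<w_k,\nabla f(x_k)>\geq C''\,||w_k||\,(f(x_k)-f(x^*))^\mu$ directly, with $\theta:=\mu<1$.

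With $\theta<1$ in hand I would close by the standard desingularising trick. Set $\phi(s):=(s-f(x^*))^{1-\theta}/(1-\theta)$; since $f(x_n)$ is decreasing with $f(x_{n_k})\to f(x^*)$ one has $f(x_n)\geq f(x^*)$ for all $n$, and combining concavity of $\phi$ with the Armijo acceptance condition,
\begin{eqnarray*}
\phi(f(x_n))-\phi(f(x_{n+1}))\;\geq\;(f(x_n)-f(x^*))^{-\theta}\cdot \tfrac{\gamma_n}{3}<w_n,\nabla f(x_n)>\;\geq\; C'''\,\gamma_n\,||w_n||.
\end{eqnarray*}
Near $x^*$ one checks $||w_n||\to 0$ (in Case A via $||w_n||\leq C\,||F(x_n)||^{1-\tau}\to 0$; in Case B via $minsp(A_n)$ staying bounded away from zero while $||\nabla f(x_n)||\to 0$), hence $\widehat{w_n}=w_n$ and $||x_{n+1}-x_n||=\gamma_n||w_n||$ for large $n$. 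Telescoping then gives $\sum ||x_{n+1}-x_n||<\infty$ provided the tail stays in $U$; a standard local-confinement argument (shrink the initial ball so that the total admissible step length cannot exit $U$) shows this holds once one iterate lies sufficiently close to $x^*$. Thus $\{x_n\}$ is Cauchy and converges to $x^*$. For part 2 I invoke the global Lojasiewicz-exponent bound of D'Acunto--Kurdyka \cite{acunto-kurdyka}: when $F$ is polynomial there is $\mu_0<1$ depending only on $m$ and the degrees with $\mu(x^*)\leq \mu_0$ at every critical point of $f=||F||^2$, and any $\tau\in(0,\min\{1,\mu_0^{-1}-1\})$ satisfies the hypothesis of part 1.

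The main obstacle is the exponent arithmetic in Case A: one has to verify that the clean-looking hypothesis $\mu(1+\tau)<1$ is actually strong enough to force $\mu+\tau/2<1$, which is the exponent the computation naturally produces. This reduces to an elementary quadratic inequality in $\tau$, but it is the only nontrivial link between the algorithm's scaling parameter and the Lojasiewicz hypothesis; the alternative of using a direct $<w_k,\nabla f(x_k)>\geq C\,||w_k||\,||\nabla f(x_k)||^{1+\tau}$ bound (as in Theorem \ref{Theorem2}) fails here because the matrix floor in the SE algorithm is driven by $||F||$, not $||\nabla f||$. A secondary, more routine difficulty is the local-confinement argument, which is standard in KL proofs but requires the correct choice of shrinking neighbourhoods.
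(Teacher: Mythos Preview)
Your argument is correct and complete: the Kurdyka--Lojasiewicz machinery, the case split, and the local-confinement step all go through as you describe, and part 2 via D'Acunto--Kurdyka is exactly what the paper does.

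However, the case split in part 1 is an unnecessary detour, and your remark that ``the alternative of using a direct $<w_k,\nabla f(x_k)>\geq C\,||w_k||\,||\nabla f(x_k)||^{1+\tau}$ bound fails here'' is not right. The paper's one-line proof (``similar to Backtracking New Q-Newton's method'') works verbatim once you notice the elementary bound
\[
||\nabla f(x)||\;=\;2\,||H(x)^{\intercal}F(x)||\;\leq\;2\,||H(x)||\cdot ||F(x)||\;\leq\;C\,||F(x)||
\]
on any bounded set. Thus $minsp(A_k)\geq \kappa\,||F(x_k)||^\tau \geq \kappa'\,||\nabla f(x_k)||^\tau$ in both algorithmic branches of Algorithm~\ref{table:alg3}, and you recover the exact Theorem~\ref{Theorem2} inequality $<w_k,\nabla f(x_k)>\geq C\,||w_k||\,||\nabla f(x_k)||^{1+\tau}$ with exponent $\theta=\mu(1+\tau)<1$ directly---no split into $F(x^*)=0$ versus $F(x^*)\neq 0$, and no quadratic inequality in $\tau$.

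What your route does buy is a slightly sharper exponent in Case~A: since $\mu\geq 1/2$ always, one has $\mu+\tau/2\leq \mu+\mu\tau=\mu(1+\tau)$, so your $\theta$ is never worse and is strictly better when $\mu>1/2$. But this gain is not used anywhere in the statement, so the paper's shorter reduction is preferable for the theorem as written.
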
  
\begin{proof}
The proof is similar to what given for  Backtracking New Q-Newton's method, by using the effective upper bound \cite{acunto-kurdyka} for the Lojasiewicz exponent of a polynomial map (in terms of degree of the map and the dimension). 
\end{proof}

\section{Experimental results on various variants of Newton's method}
\subsection{Implementation details for Backtracking New Q-Newton's method}
\label{SectionImplementation}
In this Subsection, we present some practical points concerning implementation details, for the language Python. Source code is in the GitHub link \cite{tuyenGitHub}. 

Indeed, Python has already enough commands to implement New Q-Newton's method. There is a package, named numdifftools, which allows one to compute approximately the gradient and Hessian of a function. This package is also very convenient when working with a family $f(x,t)$ of functions, where $t$ is a parameter. Another package, named scipy.linalg, allows one to find (approximately) eigenvalues and the corresponding eigenvectors of a square matrix. More precisely, given a square matrix $A$, the command $eig(A)$ will give pairs $(\lambda ,v _{\lambda})$ where $\lambda $ is an approximate eigenvalue of $A$ and $v$ a corresponding eigenvector. 

One point to notice is that even if $A$ is a symmetric matrix with real coefficients, the eigenvalues computed by the command $eig$ could be complex numbers, and not real numbers, due to the fact that these are approximately computed. This can be easily resolved by taking the real part of $\lambda$, which is given in Python codes by $\lambda .real$. We can do similarly for the eigenvectors. A very convenient feature of the command $eig$ is that it already computes (approximate) orthonormal bases for the eigenspaces. Note that similar commands are also available on PyTorch and TensorFlow, two popular libraries for implementing Deep Neural Networks.   

Now we present the coding detail of the main part of New Q-Newton's method: Given a symmetric invertible matrix $A$ with real coefficients (in our case $A=\nabla ^2f(x_n)$ $+\delta _j||\nabla f(x_n)||^{1+\alpha}$), and a vector $v$, compute $w$ which is the reflection of $A^{-1}.v$ along the direct sum of eigenspace of negative eigenvectors of $A$. First, we use the command $eig$ to get pairs $\{(\lambda _j, v_j)\}_{j=1,\ldots ,m}$. Use the command real to get real parts. If we write $v=\sum _{j=1}^m a_jv_j$, then $a_j=<v_j,v>$ (the inner product), which is computed by the Python command $np.dot(v_j,v)$. Then $v_{inv}=A^{-1}v=\sum _{j=1}^m(a_j/\lambda _j)v_j$. Finally, 
\begin{eqnarray*}
w=v_{inv}-2\sum _{j:~\lambda _j<0}(a_j/\lambda _j)v_j.
\end{eqnarray*}

\begin{remark} The implementation of Backtracking New Q-Newton's method (and other versions introduced in this paper) is very easy, flexible and works well and stably. 
 
1) We do not need to compute exactly the gradient and the Hessian of the cost function $f$, only approximately. Indeed, if one wants to stop when $||\nabla f(x_n)||$ and $||x_n-x_{\infty}||$ is smaller than a threshold $\epsilon$, then it suffices to compute the gradient and the Hessian up to an accuracy of order $\epsilon$. 

Similarly, we do not need to compute the eigenvalues and eigenvectors of the Hessian exactly, but only up to an accuracy of order $\epsilon$, where $\epsilon$ is the threshold to stop. 

In many experiments,  we only calculate the Hessian inexactly using the numdifftools package in Python, and still obtain good performance. 

2) While theoretical guarantees are proven only when the hyperparameters $\delta _0,\ldots ,\delta _m$ are randomly chosen and fixed from the  beginning, in experiments we have also tried to choose - at each iterate $n$ - choose randomly a $\delta$. We find that this variant has {a similar or better performance} as the original version. 

3) Similarly, the exponent $\tau$ in the algorithms can be flexibly chosen, need not to be $\leq 1$ as required in theoretical results. 

4) We simplify further by not checking the condition $minsp(\nabla^2f(x_k)+\delta_j \|\nabla f(x_k)\|^{\tau}Id)\geq \kappa  \|\nabla f(x_k)\|^{\tau}$, but only if $\nabla^2f(x_k)+\delta_j \|\nabla f(x_k)\|^{\tau}Id$ is invertible or not. Experiments show that this version behaves similarly to the official version of Backtracking New Q-Newton's method. 

Another simplification is that instead of choosing $m+1$ numbers $\delta _0,\ldots ,\delta _m$, we choose only 2 numbers $\delta _0$ and $\delta _1$ so that $|\delta _1-\delta _0|$ is small. Indeed, we choose $\delta _0=1.00001$ and $\delta _2=0.999$. 

\end{remark}

\subsection{Experimental results }

Here we present a couple of illustrating experimental results. Additional experiments, which are quite extensive, will be presented in the arXiv version of the paper. We use the python package numdifftools \cite{num} to compute gradients and Hessian, since symbolic computation is not quite efficient. The experiments here are run on a small personal laptop. The unit for running time is seconds.

Here, we will compare the performance of Backtracking New Q-Newton's method (and its simplified version Simplified Backtracking New Q-Newton's method) against several, including well known, existing variants of Newton's method:  the usual Newton's method, BFGS, Adaptive Cubic Regularization \cite{nesterov-polyak} \cite{cartis-etal}, as well as Random damping Newton's method \cite{sumi}  and Inertial Newton's method \cite{bolte-etal}.
 
For Backtracking New Q-Newton's method and Simplified Backtracking Newton's method,  we choose $\tau =1$ in the definition. Moreover, we will choose $\Delta =\{0,\pm 1\}$, even though for theoretical proofs we need $\Delta$ to have at least $m+1$ elements, where $m=$ the number of variables. The justification is that when running Backtracking New Q-Newton's method it almost never happens the case that both $\nabla ^2f(x)$ and $\nabla ^2f(x)\pm ||\nabla f(x)||^2Id$ are not invertible. The experiments are coded in Python and run on a usual personal computer. For BFGS: we use the function scipy.optimize.fmin$\_$bfgs available in Python, and put  $gtol=1e-10$ and $maxiter=1e+6$. For Adaptive cubic regularization for Newton's method, we use the AdaptiveCubicReg module in the implementation in \cite{ARCGitHub}. We use the default hyperparameters as recommended there, and use ``exact" for the hessian$\_$update$\_$method.  For hyperparameters in Inertial Newton's method, we choose $\alpha =0.5$ and $\beta =0.1$ as recommended by the authors of \cite{bolte-etal}. Source codes for the current paper are available at the GitHub link \cite{tuyenGitHub}. For Levenberg-Macquardt's method, we use $\lambda _n=||f(x_n)||^{\tau}$, with the same exponent $\tau=1$ as in Backtracking New Q-Newton's method. For Regularized Newton's method, we choose $c_1=1.5$ and $\lambda _n=||\nabla f(x_n)||^{\tau}$. 

Except one, in all of the experiments, we choose simply $\widehat{w_k}=w_k$ in Backtracking line search (in the algorithms Backtracking Regularized Newton's method, Backtracking Levenberg-Marquardt's method, Backtracking New Q-Newton's method, and Simplified Backtracking New Q-Newton's method), even if the cost function does not have compact sublevels. 

{\bf Feature reported:} We report on the number of iterates needed for an algorithm to achieve a prescribed threshold (more precisely, if $||\nabla f(z_n)||<\epsilon $ or $||z_{n+1}-z_n||<\epsilon $, where we fix $\epsilon =10^{-10}$ in the experiments) or the number of iterates reaches a threshold (10000 iterations), the time (in seconds) it needs to run, the values of the cost function and its gradient at the end point (for a system of equations, we also report the value of each function in the system), whether the Hessian of the cost function at the end point has a negative eigenvalue. 

{\bf Legends:} We use the following abbreviation in the reports: ``ACR" for Adaptive Cubic Regularization,   ``BFGS" for BFGS, ``Newt" for Newton's method, ``Newt-SE" for Newton's method applied directly to a system of equations $F$ instead of to the associated cost function $f=||F||^2/2$,  ``RegN" for  Regularized Newton's method, ``B-RegN" for Regularized Newton's method with Backtracking line search incorporated,  ``LM" for Levenberg-Marquardt method, ``B-LM" for Levenberg-Marquardt method with Backtracking line search incorporated,  ``Iner" for Inertia Newton's method, ``B-NewQ" for Backtracking New Q-Newton's method,  and ``SB-NewQ" for Simplified Backtracking New Q-Newton's method. 

\subsubsection{One polynomial system in 2 real variables}

This example is taken from \cite{fr}. We consider the following system in 2 real variables $x_1,x_2$. 
\begin{eqnarray*}
f_1:=p_{10}+p_{11}x_1+p_{12}x_2+p_{13}x_2^2+p_{14}x_2^3&=&0,\\
f_2:=p_{20}+p_{21}x_1+p_{22}x_2+p_{23}x_2^2+p_{24}x_2^3&=&0,
\end{eqnarray*}
where $p_{10}=-13$, $p_{11}=1$, $p_{12}=-2$, $p_{13}=5$, $p_{14}=-1$, $p_{20}=-29$, $p_{21}=1$, $p_{22}=-14$, $p_{23}=1$ and $p_{24}=1$. Recall that $f=(f_1^2+f_2^2)/2$. We chose randomly initial points with coordinates in the interval $[-100,100]$. 

For the first experiment, the initial point is $z_0=[-9.12027123, -3.7284278 ]$. It turns out that only Newt-SE converges to a global minimum of the cost function $f$ ($[5,4]$), the other methods either converge to a local minimum or (ACR, Iner) stopped before being close enough to a critical point. See Table \ref{tab:Experiment1} for details. 

\begin{table}[htp]
\fontsize{11}{11}\selectfont
  \centering

\begin{tabular}{c|c|c|c|c|c|c|c}
& $\sharp N$ &$f(z_N)$ &$||\nabla f(z_N)||$&$f_1(z_N)$ &$f_2(z_N)$&eigenS& Time   \\ \hline
ACR&6&334&42& 3.77&-25&1.02&0.02 \\
BFGS &15&24.49&4e-12&4.9&-4.9&4e-1&0.13\\
Newt &10&24.49&8e-14&4.9&-4.9&4e-1&0.10 \\
Newt-SE &16&6e-28& 1e-12&2e-14&-2e-14&1.4&0.19\\
RegN &19&24.49&1e-14&4.9&-4.9&4e-1&0.19\\
B-RegN & 20&24.49&4e-14&4.9&-4.9&4e-1&0.26\\
LM & 58&24.49&3e-10&4.9&-4.9&4e-1&0.69\\
B-LM &46&24.49&6e-7&4.9&-4.9&4e-1&0.55 \\
Iner &2&7e+85&2e+72&8e+42&-8e+42&0&0.01 \\ \hline
B-NewQ &10&24.49&3e-10&4.9&-4.9&4e-1&0.10\\
SB-NewQ &19&24.49&1e-13&4.9&-4.9&4e-1&0.21
 \end{tabular}
   \caption{Experimental results  using various variants of Newton's method to  solve the system of equations $f_1:=-13+1x_1+2x_2+5x_2^2-1x_2^3=0$ and $f_2:=-29+1x_1+14x_2+1x_2^2+1x_2^3=0$ in 2 real variables $x_1$ and $x_2$.  The cost function is $f=(f_1^2+f_2^2)/2$. The initial point is $z_0=[-84.439842, -1.60847421 ]$, at which $f_1(z_0)\sim -77$, $f_2(z_0)\sim -92$, and $f(z_0)\sim 7251$. {\bf Legends}: $\sharp N$ (maximum $10000$) =is the number of iterates the concerned algorithm runs before stopping when $||\nabla f(z_N)||$ or $||z_N-z_{N-1}||$ is small than a threshold or when some errors happen, $z_N=$ the point constructed at the step $N$, ``eigenS"=the smallest eigenvalue of the Hessian of $f$ at $z_N$, and ``Err" means that some errors (e.g. NAN, overflow, and so on) happened.}
  \label{tab:Experiment1}
\end{table}

For the second and third experiments, we choose the initial points $z_0=[-84.439842, -1.60847421]$ and $z_0=[15,-2]$. The results obtained are similar to that in Table \ref{tab:Experiment1}. In the next Subsubsection, we consider the same system but now with complex variables. 

\subsubsection{One polynomial system in 2 complex variables}

Here, we explore the same system as in the previous Subsubsection: 
\begin{eqnarray*}
f_1:=p_{10}+p_{11}x_1+p_{12}x_2+p_{13}x_2^2+p_{14}x_2^3&=&0,\\
f_2:=p_{20}+p_{21}x_1+p_{22}x_2+p_{23}x_2^2+p_{24}x_2^3&=&0,
\end{eqnarray*}
where $p_{10}=-13$, $p_{11}=1$, $p_{12}=-2$, $p_{13}=5$, $p_{14}=-1$, $p_{20}=-29$, $p_{21}=1$, $p_{22}=-14$, $p_{23}=1$ and $p_{24}=1$. The difference here is that we consider $x_1,x_2$ as {\bf complex} variables, and hence the cost function $f$ is modified to $f=(|f_1|^2+|f_2|^2)/2$. It means that we are finding roots to 4 equations in 4 real variables. 

We will choose initial points as small perturbations of the initial points chosen in the previous Subsubsection.

For the first experiment, we choose the initial point $$z_0=[-9.12027123+ 0.001 i, -3.7284278 - 0.001 i].$$  In this case, it turns out that Newt-SE still converges to the same global minimum $[5,4]$ as in the real case, Newton's method still converges to the same point as it did in the real case but now the point is no longer a local minimum (!),  ACR and Iner are stopped before close enough to a critical point, while other methods converge to a different global minimum ($[13-14i, -1-1i]$). See Table \ref{tab:Experiment2} for details. 

\begin{table}[htp]
\fontsize{11}{11}\selectfont
  \centering

\begin{tabular}{c|c|c|c|c|c|c|c}
& $\sharp N$ &$f(z_N)$ &$||\nabla f(z_N)||$&$|f_1(z_N)|$ &$|f_2(z_N)|$&eigenS& Time   \\ \hline
ACR&3&3e+93&5e+78& 5e+46&5e+46&0.00&0.02 \\
BFGS &27&1e-26&3e-12&1e-13&4e-14&6e-1&0.30\\
Newt &10&24.49&2e-14&4.9&4.9&-6e-10&0.10 \\
Newt-SE &16&2e-28&$1e-14$ &1e-14&-1e-14&1.4&0.76\\
RegN &19&24.49&1e-14&4.9&-4.9&4e-1&0.19\\
B-RegN & 20&24.49&4e-14&4.9&-4.9&4e-1&0.26\\
LM & 31&6e-30&3e-14&2e-15&2e-15&6e-1&1.50\\
B-LM &33&9e-30&7e-14&2e-15&3e-15&6e-1&1.59 \\
Iner &2&7e+85&2e+72&8e+42&-8e+42&0.00&0.01 \\ \hline
B-NewQ &31&5e-27&1e-13&6e-14&7e-14&6e-1&1.30\\
SB-NewQ &25&8e-27&1e-13&8e-14&1e-12&6e-1&1.18
 \end{tabular}
   \caption{Experimental results  using various variants of Newton's method to  solve the system of equations $f_1:=-13+1x_1+2x_2+5x_2^2-1x_2^3=0$ and $f_2:=-29+1x_1+14x_2+1x_2^2+1x_2^3=0$ in 2 complex variables $x_1$ and $x_2$.  The cost function is $f=(|f_1|^2+|f_2|^2)/2$. The initial point is $z_0=[-9.12027123+ 0.001 i, -3.7284278 - 0.001 i]$, at which $|f_1(z_0)|\sim 106$, $|f_2(z_0)|\sim 23.8$, and $f(z_0)\sim 5971$. {\bf Legends}: $\sharp N$ (maximum $10000$) =is the number of iterates the concerned algorithm runs before stopping when $||\nabla f(z_N)||$ or $||z_N-z_{N-1}||$ is small than a threshold or when some errors happen, $z_N=$ the point constructed at the step $N$, ``eigenS"=the smallest eigenvalue of the Hessian of $f$ at $z_N$, and ``Err" means that some errors (e.g. NAN, overflow, and so on) happened.}
  \label{tab:Experiment2}
\end{table}

For the second experiment, we choose the initial point $z_0=[15-0.000001i, -2-0.00002i]$, which is a small perturbation of the corresponding point in the real case. We obtain similar results like in Table \ref{tab:Experiment2}, except that the performance of ACR is improved: after 2 iterates, it stopes (because of some ill-conditioned Hessian errors) at a point whose function value is $f\sim 4230$. 

For the third experiment, we choose the initial point $z_0=[-84.439842 -0.000001i,    -1.60847421+0.000001 i]$, which is a small perturbation of the corresponding point in the real case. We obtain similar results like in Table \ref{tab:Experiment2}, except that the performance of ACR is improved: after 12 iterates, it stopes (because of some ill-conditioned Hessian errors) at a point close to $\sim [13+14i,-1+1i]$ which is another global minimum of $f$.  

\subsubsection{One non-polynomial system in 3 real variables} We consider the following system, taken from \cite{hueso-etal} , 
\begin{eqnarray*}
f_1:=3x_1-cos(x_2x_3)-0.5&=&0,\\
f_2:=x_1^2-625x_2^2-0.25&=&0,\\
f_3:=e^{-x_1x_2}+20x_3+(10\pi -3)/3&=&0,
\end{eqnarray*}
in 3 real variables $x_1,x_2,x_3$. This system has a root at $z^*=(0.5,0,-\pi /6 )\sim (0.5,0,-0.523)$. At this point, the Jacobian of the system is singular.

Recall that $f=(f_1^2+f_2^2+f_3^2)/2$. We chose randomly initial points with coordinates in the interval $[-50,50]$.

For the first experiment, we choose the initial point to be 
$$z_0=[-42.38817886, -13.88913045,  10.93977723].$$ See Table \ref{tab:Experiment3} for details. In this case, since the Backtracking Regularized Newton's method stops long before the maximum number of iterates (10000), and the Hessian of the function at the stopping point has a negative eigenvalue, it seems that this method cannot avoid saddle points.

\begin{table}[htp]
\fontsize{11}{11}\selectfont
  \centering

\begin{tabular}{c|c|c|c|c|c|c|c|c}
& $\sharp N$ &$f(z_N)$ &$||\nabla f(z_N)||$&$f_1(z_N)$ &$f_2(z_N)$&$f_3(z_N)$&eigenS& Time   \\ \hline
ACR&Err&&&&&&& \\
BFGS &97&3e-20&6e-11&7e-11&-2e-10&3e-12&8e-7&0.58\\
Newt &72&3e-21&2e-11&2e-11&-8e-11&5e-15&3e-7&1.45 \\
Newt-SE &Err&&&&&&\\
RegN &247&6e-21&2e-11&2e-11&-1e-10&8e-15&4e-7&5.40\\
B-RegN & 215&404&697&27.6&-4.95&-4.54&-31.1&4.74\\
LM & Err&&&&&&\\
B-LM &62&1e-21&5e-11&-5e-14&-5e-11&8e-15&1e-7&1.40 \\
Iner &Err&&&&&&& \\ \hline
B-NewQ &35&4e-21&2e-11&2e-11&-9e-11&5e-15&3e-7&0.59\\
SB-NewQ &38&7e-21&3e-11&2e-11&-1e-10&7e-15&4e-7&0.605
 \end{tabular}
   \caption{Experimental results  using various variants of Newton's method to  solve the system of equations $f_1:=3x_1-cos(x_2x_3)-0.5=0$,  $f_2:=x_1^2-625x_2^2-0.25=0$ and $f_3:=e^{-x_1x_2}+20x_3+(10\pi -3)/3=0$ in 3 real variables $x_1$, $x_2$ and $x_3$.  The cost function is $f=(f_1^2+f_2^2+f_3^2)/2$. The initial point is $z_0=[-42.38817886, -13.88913045,  10.93977723]$, at which $f_1(z_0)\sim -128$, $f_2(z_0)\sim -1e+5$, $f_3(z_0)=228$ and $f(z_0)\sim 7e+9$. {\bf Legends}: $\sharp N$ (maximum $10000$) =is the number of iterates the concerned algorithm runs before stopping when $||\nabla f(z_N)||$ or $||z_N-z_{N-1}||$ is small than a threshold or when some errors happen, $z_N=$ the point constructed at the step $N$, ``eigenS"=the smallest eigenvalue of the Hessian of $f$ at $z_N$, and ``Err" means that some errors (e.g. NAN, overflow, and so on) happened.}
  \label{tab:Experiment3}
\end{table}

For the second experiment, we choose the initial point to be 
$$z_0=[-42.68403992, -47.90598209,  22.59078781].$$ See Table \ref{tab:Experiment4} for details. In this case, since the Backtracking Regularized Newton's method stops long before the maximum number of iterates (10000), and the Hessian of the function at the stopping point has a negative eigenvalue, it seems that this method cannot avoid saddle points.

\begin{table}[htp]
\fontsize{11}{11}\selectfont
  \centering

\begin{tabular}{c|c|c|c|c|c|c|c|c}
& $\sharp N$ &$f(z_N)$ &$||\nabla f(z_N)||$&$f_1(z_N)$ &$f_2(z_N)$&$f_3(z_N)$&eigenS& Time   \\ \hline
ACR&Err&&&&&&& \\
BFGS &Err&&&&&&&\\
Newt &41&1e-20&4e-11&3e-11&-1e-10&8e-15&5e-7&0.89 \\
Newt-SE &Err&&&&&&\\
RegN &420&5e-21&2e-11&2e-11&-1e-10&1e-15&3e-7&9.17\\
B-RegN &387&394&813&27.4&-4.46&-3.74&-2.43&8.26\\
LM & Err&&&&&&\\
B-LM &Err&&&&&&& \\
Iner &Err&&&&&&& \\ \hline
B-NewQ &39&6e-21&2e-11&2e-11&-1e-10&5e-15&4e-7&0.58\\
SB-NewQ &42&3e-21&2e-11&1e-11&-7e-11&1e-15&2e-7&0.61
 \end{tabular}
   \caption{Experimental results  using various variants of Newton's method to  solve the system of equations $f_1:=3x_1-cos(x_2x_3)-0.5=0$,  $f_2:=x_1^2-625x_2^2-0.25=0$ and $f_3:=e^{-x_1x_2}+20x_3+(10\pi -3)/3=0$ in 3 real variables $x_1$, $x_2$ and $x_3$.  The cost function is $f=(f_1^2+f_2^2+f_3^2)/2$. The initial point is $z_0=[-42.68403992, -47.90598209,  22.59078781]$, at which $f_1(z_0)\sim -128$, $f_2(z_0)\sim -1e+6$, $f_3(z_0)=461$ and $f(z_0)\sim 1e+12$. {\bf Legends}: $\sharp N$ (maximum $10000$) =is the number of iterates the concerned algorithm runs before stopping when $||\nabla f(z_N)||$ or $||z_N-z_{N-1}||$ is small than a threshold or when some errors happen, $z_N=$ the point constructed at the step $N$, ``eigenS"=the smallest eigenvalue of the Hessian of $f$ at $z_N$, and ``Err" means that some errors (e.g. NAN, overflow, and so on) happened.}
  \label{tab:Experiment4}
\end{table}

\section{Extensions, Some pictures of basins of attraction, and Conclusions}

\subsection{Some further extensions} We can extend the algorithms above to the setting where at each iterate one projects to subspaces different from eigenspaces of the Hessian of $f$, or more generally working with a symmetric square matrix having no relations to the Hessian of $f$. This way, we can unify first and second order methods, and among these algorithms some have the flavour of quasi-Newton's methods. 

\medskip
{\color{blue}
 \begin{algorithm}[H]
\SetAlgoLined
\KwResult{Find a minimum of $f:\mathbb{R}^m\rightarrow \mathbb{R}$}
Given: $\{\delta_0,\delta_1,\ldots, \delta_{m}\} \subset \mathbb{R}$\ (chosen {\bf randomly}), $0<\tau $, $q\geq 1$ and $0<\gamma _0<1$;\\
Initialization: $x_0\in \mathbb{R}^m$\;
$\kappa:=\frac{1}{2}\min _{i\not=j}|\delta _i-\delta _j|$;\\
 \For{$k=0,1,2\ldots$}{ 
    $j=0$\\
  \If{$\|\nabla f(x_k)\|\neq 0$}{Choose a symmetric real matrix $B_k$\;
   \While{$minsp(B_k+\delta_j \|\nabla f(x_k)\|^{\tau}Id)<\kappa  \|\nabla f(x_k)\|^{\tau}$}{$j=j+1$}}
  
 $A_k:=B_k+\delta_j \|\nabla f(x_k)\|^{\tau}Id$\\
$e_{1,k},\ldots ,e_{m,k}$ an appropriately chosen orthonormal basis for $\mathbb{R}^m$\\
$w_k:=\sum _{i=1}^m\frac{<\nabla f(x_k),e_{i,k}>}{(\sum _{j=1}^m|<A_k.e_{i,k},e_{j,k}>|^q)^{1/q}}e_{i,k}$\\
$\widehat{w_k}:=w_k/\max \{1,||w_k||\}$\\
$\gamma :=\gamma _0$\\
 \If{$\|\nabla f(x_k)\|\neq 0$}{
   \While{$f(x_k-\gamma \widehat{w_k})-f(x_k)>-\gamma <\widehat{w_k},\nabla f(x_k)>/3$}{$\gamma =\gamma /3$}}

$x_{k+1}:=x_k-\gamma \widehat{w_k}$
   }
  \caption{Backtracking New Q-Newton's method G} \label{table:algG}
\end{algorithm}
}
\medskip

First, we investigate some special cases of Backtracking New Q-Newton's method G where $B_k:=\nabla ^2f(x_k)$, and where $q=2$ (in which case $(\sum _{j=1}^m|<A_k.e_{i,k},e_{j,k}>|^q)^{1/q}=||A_ke_{i,k}||$ the usual Euclidean norm of the vector $A_ke_{i,k}$). We fix $e_1,\ldots ,e_m$ an orthonormal basis for $\mathbb{R}^m$. 

{\bf Backtracking New Q-Newton's method:} We choose $e_{1,k},\ldots ,e_{m,k}$ eigenvectors of $A_k$. Hence, if $\lambda _i$ is the corresponding eigenvalue of $e_{i,k}$ of $A_k$, then 
\begin{eqnarray*}
w_k=\sum _{i=1}^m\frac{<\nabla f(x_k),e_{i,k}>}{|\lambda _i|}e_{i,k}. 
\end{eqnarray*}

{\bf Backtracking New Q-Newton's method Backtracking G1:}  We choose $e_1(x),\ldots ,e_m(x)$'s to be eigenvectors of $\nabla ^2f(x)$ if $minsp(\nabla ^2f(x))\geq ||\nabla f(x)||^{1/2}$, otherwise $e_1(x)$, $\ldots$, $e_m(x)$ are $e_1$, $\ldots$ , $e_m$. This is a combination between New Q-Newton's method Backtracking and New Q-Newton's method Backtracking G2 (see next). As such, it has fast rate of convergence as New Q-Newton's method Backtracking, while also has good convergence guarantee as New Q-Newton's method Backtracking G3. 

{\bf New Q-Newton's method Backtracking G2:} We choose $e_{i,k}=e_i$ for all $i=1,\ldots ,m$. This version is probably the less computationally expensive, and hence has some flavours of quasi-Newton's methods. On the other hand, it has quite good theoretical guarantees. 

\begin{remark}

Usually, the choice of $\tau >1$ will assure that the associated dynamical system is $C^1$, which helps more with avoidance of saddle points. 

On the other hand, the choice of $0<\tau <1$ helps more with theoretical convergence guarantees. 

However, in experiments, we found that there is no real difference between the choices $\tau >1$ or $0<\tau <1$ or $\tau =1$. 

\end{remark}

As in Section 2, the following result can be used to establishing that Backtracking New Q-Newton's method G can avoid saddle points. (The convergence of it can be dealt similarly to that for Backtracking New Q-Newton's method and other methods in Section 2.)

\begin{theorem} Let $f(x)=||F(x)||^2$ be as above. Let $w(x)$ be chosen as in the algorithm Backtracking New Q-Newton's method G. Assume that $q\geq 1$ is so that its H\"older's conjugate $p$ (i.e. the number $p\geq 1$ so that $p^{-1}+q^{-1}=1$, the value $+\infty$ is allowed) satisfies $m^{1/p}<4/3$. Assume moreover that for $x$ close enough to a generalised saddle point $x^*$ of $f$, we have $B(x)-\nabla ^2f(x)+O(||\nabla f(x)||)$ is positive definite. Then, for all $x$ close enough to $x^*$ we have $\gamma (x)=1$. 

\label{Theorem7}\end{theorem}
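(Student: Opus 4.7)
The plan is to show that for $x$ close enough to $x^*$, the initial step size $\gamma_0$ (written as $1$ in the statement) already satisfies Armijo's condition
\[
f(x-w(x))-f(x)\leq -\tfrac{1}{3}\langle w(x),\nabla f(x)\rangle,
\]
so that the backtracking loop never triggers and $\gamma(x)=\gamma_0$. The outline parallels the proofs of Theorems \ref{Theorem9} and \ref{Theorem10}: Taylor-expand $f(x-w)-f(x)$ to second order, use the hypothesis on $B(x)-\nabla^2 f(x)$ to replace $\nabla^2 f(x)$ by $A(x)=B(x)+\delta_j\|\nabla f(x)\|^{\tau}\,\mathrm{Id}$ in the quadratic term up to an error of order $\|\nabla f(x)\|^{\min(1,\tau)}\|w\|^2$, and then bound $\langle A(x)w,w\rangle$ against $\langle w,\nabla f(x)\rangle$. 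The genuinely new point compared to Theorem \ref{Theorem9} is that here $\{e_{i,k}\}$ need not be an eigenbasis of $A(x)$, and the loss incurred in that last bound is exactly the factor $m^{1/p}$ which is what forces the assumption $m^{1/p}<4/3$.

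The technical heart of the argument is the pointwise inequality
\[
|\langle A(x)w(x),w(x)\rangle|\leq m^{1/p}\,\langle w(x),\nabla f(x)\rangle.
\]
Writing $a_i=\langle \nabla f(x),e_{i,k}\rangle$, $M_{ij}=\langle A(x)e_{i,k},e_{j,k}\rangle$ (so that $M_{ij}=M_{ji}$ by symmetry of $A$), and $\alpha_i=\bigl(\sum_j |M_{ij}|^q\bigr)^{1/q}$, so that $w(x)=\sum_i (a_i/\alpha_i)\,e_{i,k}$ and $\langle w,\nabla f\rangle=\sum_i a_i^2/\alpha_i$, I would combine the elementary estimate $2|w_iw_j|\leq w_i^2+w_j^2$ with H\"older's inequality $\sum_j |M_{ij}|\leq m^{1/p}\alpha_i$ (which uses $1/p+1/q=1$). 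Together they give
\[
|\langle A(x)w,w\rangle|\leq \sum_{i,j}\tfrac{w_i^2+w_j^2}{2}|M_{ij}| = \sum_i w_i^2\sum_j|M_{ij}|\leq m^{1/p}\sum_i w_i^2\alpha_i = m^{1/p}\sum_i \tfrac{a_i^2}{\alpha_i},
\]
which is the desired inequality.

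Combining these ingredients yields
\[
f(x-w)-f(x)\leq -\bigl(1-\tfrac{1}{2}m^{1/p}\bigr)\langle \nabla f(x),w(x)\rangle + o(\|w\|^2)+O\!\bigl(\|\nabla f(x)\|^{\min(1,\tau)}\bigr)\|w\|^2,
\]
and the hypothesis $m^{1/p}<4/3$ forces the coefficient of $\langle \nabla f,w\rangle$ to exceed $1/3$. The main obstacle, and the only non-formal step, is absorbing the two error terms. For this I would exploit the hypothesis on $B$: evaluating it at $x=x^*$ gives $B(x^*)-\nabla^2 f(x^*)\succ 0$, and by continuity together with the algorithmic lower bound $minsp(A(x))\geq \kappa\|\nabla f(x)\|^{\tau}$, the $\alpha_i$ are bounded above uniformly and bounded below by a positive multiple of $\|\nabla f(x)\|^{\tau}$ on some neighbourhood of $x^*$. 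This yields the comparisons $\langle \nabla f(x),w(x)\rangle\gtrsim \|\nabla f(x)\|^2$ and $\|w(x)\|=O(\|\nabla f(x)\|^{1-\tau})$, which together dominate both the Taylor remainder $o(\|w\|^2)$ and the perturbation $O(\|\nabla f\|^{\min(1,\tau)})\|w\|^2$ as $x\to x^*$ --- exactly as in the analogous error analysis for Theorem \ref{Theorem9}. Hence Armijo's inequality with constant $1/3$ holds at $\gamma=\gamma_0$ for $x$ sufficiently close to $x^*$, as claimed.
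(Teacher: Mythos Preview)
Your approach is the paper's: Taylor-expand, pass from $\nabla^2 f$ to $A(x)$ via the positive-definiteness hypothesis on $B-\nabla^2 f$, and then bound $\langle Aw,w\rangle\le m^{1/p}\langle w,\nabla f\rangle$ by combining $2|w_iw_j|\le w_i^2+w_j^2$ with H\"older's inequality $\sum_j|M_{ij}|\le m^{1/p}\alpha_i$. That core computation matches the paper line for line.

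The error-absorption step, however, does not close with the bounds you actually state. You invoke only the algorithmic lower bound $\alpha_i\gtrsim\|\nabla f\|^{\tau}$, which yields $\|w\|^2=O(\|\nabla f\|^{2-2\tau})$ while $\langle w,\nabla f\rangle\gtrsim\|\nabla f\|^{2}$. With these, the ratio $o(\|w\|^2)/\langle w,\nabla f\rangle$ is of order $o(1)\cdot\|\nabla f\|^{-2\tau}$, which need not tend to zero; the same problem afflicts the perturbation term $O(\|\nabla f\|^{\min(1,\tau)})\|w\|^2$. The paper instead asserts $\|w(x)\|\sim\|\nabla f(x)\|$ near $x^*$---equivalently that the $\alpha_i$ are bounded below by a \emph{positive constant}, not merely by $\kappa\|\nabla f\|^{\tau}$---and justifies this by noting that $\|F\|$ is bounded away from zero at a generalised saddle. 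Your analogy with Theorem~\ref{Theorem9} implicitly relies on this same uniform lower bound (there $A(x)=H^\top H+\delta_1\|F\|^{\tau}\,\mathrm{Id}$ satisfies $minsp(A)\ge\delta_1\|F(x^*)\|^{\tau}>0$ precisely because $\|F\|$ appears, not $\|\nabla f\|$). In the present setting $A(x)=B(x)+\delta_j\|\nabla f(x)\|^{\tau}\,\mathrm{Id}\to B(x^*)$ as $x\to x^*$, so to get the needed uniform bound you must argue that $B(x^*)$ is invertible; this does not follow immediately from the hypothesis $B(x^*)\succ\nabla^2 f(x^*)$ alone.
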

\begin{proof}
Since $||F(x)||$ is bounded away from 0 near a generalised saddle point $x^*$ of $f$, we have that $||w(x)||\sim ||\nabla f(x)||$ near $x^*$. Also, $<w(x),\nabla f(x)>\sim ||w(x)||^2$.   By Taylor's expansion we have
\begin{equation}
f(x-w(x))-f(x)=-<w(x),\nabla f(x)>+\frac{1}{2}<\nabla ^2f(x)w(x),w(x)>+O(||w(x)||^3). 
\label{Equation11}\end{equation}
Denote $a_i=<\nabla f(x),e_i(x)>$, $b_{i,j}=<A(x)e_i(x),e_j(x)>$, and $B_i=(\sum _{j=1}^m|b_{i,j}|^q)^{1/q}$. We have
\begin{eqnarray*}
<w(x),\nabla f(x)>=\sum _{i=1}^ma_i^2/B_i. 
\end{eqnarray*}

 Now we bound from above the second summand in the RHS in Equation (\ref{Equation11}), note that all $\delta _j>0$: 
\begin{eqnarray*}
<\nabla ^2f(x)w(x),w(x)>&\leq& <[B(x)+O(||w(x)||].w(x),w(x)>\\
&\leq& <A(x)w(x),w(x)>+O(||w(x)||^3)\\
&=&<\sum _{i=1}^ma_iA(x)e_i(x)/B_i,\sum _{j=1}^ma_je_j(x)/B_j>\\
&=&\sum _{i=1}^m\sum _{j=1}^ma_ia_jb_{i,j}/(B_iB_j)
\end{eqnarray*}
Now, by Cauchy-Schwartz inequality, we have
\begin{eqnarray*}
|a_ia_jb_{i,j}|/(B_iB_j) \leq \frac{a_i^2|b_{i,j}|}{2B_i^2}+\frac{a_j^2|b_{i,j}|}{2B_j^2}
\end{eqnarray*}
By H\"older's inequality, we obtain for all $i$:
\begin{eqnarray*}
\sum _{j=1}^m|b_{i,j}|\leq m^{1/p}B_i
\end{eqnarray*}
This implies, from the assumption on $p$, that
\begin{eqnarray*}
f(x-w(x))-f(x)<-\frac{1}{3}<w(x),\nabla f(x)>
\end{eqnarray*}
for all $x$ close enough to $x^*$. Thus we can choose $\gamma (x)=1$ for all such points. 
\end{proof}

\subsection{Some pictures of basins of attraction}

Below, we display pictures of basins of attractions when applying Backtracking New Q-Newton's method  for some polynomials of small degrees (2,3 4 and 5), and a transcendental function. We compare these with the basins obtained by the standard Newton's method and Backtracking Gradient Descent. It is observed that the basins for Backtracking New Q-Newton's method are much more regular than those for the standard Newton's method and Backtracking Gradient Descent, and hence the regularity observed does not come from properties of Backtracking Gradient Descent alone. 

The pictures below are created by choosing the initial point $z_0$ in a lattice $v+( 0.1j, 0.1k )$, for $j,k\in [-30,30]$, and $v$ is a randomly chosen vector with coordinates in $[-1.1]$.

\underline{\bf For a polynomial of degree 2:} 

We consider the polynomial $P_2(z)=(z-z_1^*)(z-z_2^*)$, where $z_1^*=0.5-0.2i$ and $z_2^*=1+0.4i$. This polynomial satisfies the conditions for to apply the results in \cite{truong-etal}, and hence when we apply Backtracking New Q-Newton's method for the function $f(x,y)=\frac{1}{2}|P(x+iy)|^2$,  if the initial point $z_0$ is randomly chosen then the constructed sequence $\{z_n\}$ will converge, to either $z_1^*$ or $z_2^*$. The basins of attraction (see  Figure \ref{fig:Degree2}) seem to very much satisfy the mentioned result by Arthur Cayley.

\begin{figure}
\centering
        \begin{subfigure}[b]{0.5\textwidth}
        
         \includegraphics[width=\linewidth ]{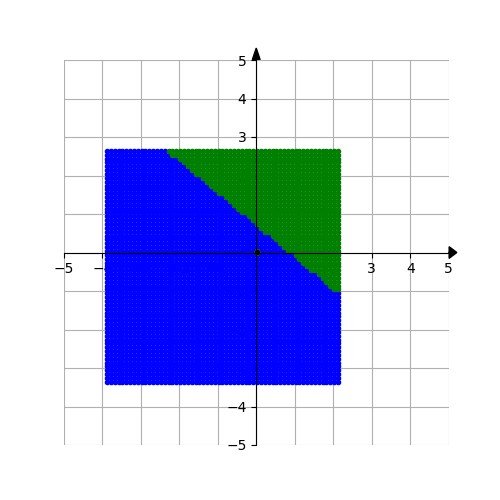}
              \caption{For Backtracking Gradient Descent.} 
                      \end{subfigure}%
        \begin{subfigure}[b]{0.5\textwidth}
              
               \includegraphics[width=\linewidth ]{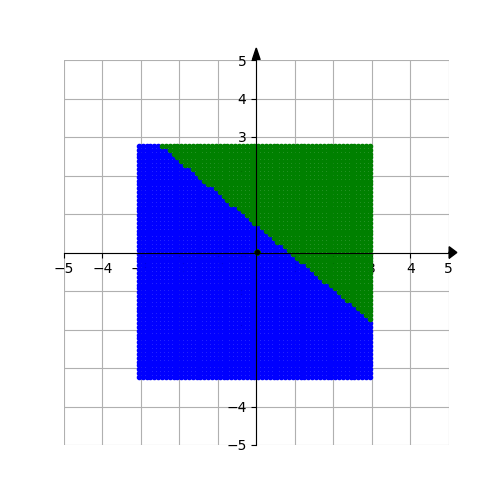}
              \caption{For Backtracking New Q-Newton's method. }

                      \end{subfigure}%
        \caption{Basins of attraction when applying Backtracking New Q-Newton's method to the polynomial $P_2(z)$.   Blue: initial points $z_0$ for which the constructed sequence converges to $z_1^*$.  Green: region consists of initial points $z_0$ for which the constructed sequence converges to $z_2^*$. It seems that the theorem of Arthur Cayley (for Newton's method) is also valid for Backtracking Gradient Descent and Backtracking New Q-Newton's method.} 
       \label{fig:Degree2}
\end{figure}

\underline{\bf For a polynomial of degree 3:} 

We consider the polynomial $P_3(z)=z^3-2z+2$, which has 3 roots: $z_1^*\sim -1.76929$, $z_2^*\sim 0.884646-0.589743i$ and $z_3^*=0.884646+0.589743i$. It is known \cite{NewtonFractal}, that when applying the standard Newton's method to this polynomial, then the basins of attraction are fractal. Moreover, there are sets of positive Lebesgue measure where Newton's method applied to an initial point in these sets will not converge to any of the roots. 

In contrast, since this polynomial satisfies the conditions to apply the results in \cite{truong-etal}, Backtracking New Q-Newton's method applied to  a random initial point $z_0$ will always converge to one of the 3 roots. Moreover, the basins of attraction do not seem to have a fractal structure.  

Basins of attraction for Backtracking Gradient Descent seem to be more regular than that for Newton's method, but less regular than that for Backtracking New Q-Newton's method. 

\begin{figure}
\centering

	 \begin{subfigure}[b]{0.7\textwidth}
        
         \includegraphics[width=\linewidth ]{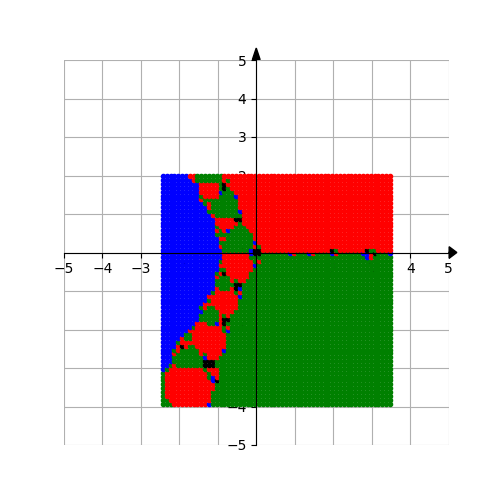}
              \caption{For Newton's method.} 
                      \end{subfigure}%

        \begin{subfigure}[b]{0.5\textwidth}
        
         \includegraphics[width=\linewidth ]{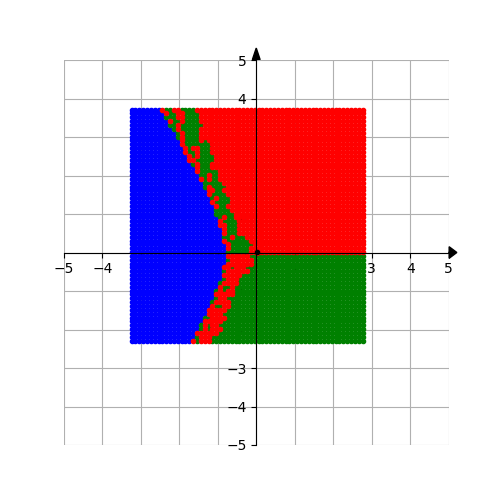}
              \caption{For Backtracking Gradient Descent.} 
                      \end{subfigure}%
        \begin{subfigure}[b]{0.5\textwidth}
              
               \includegraphics[width=\linewidth ]{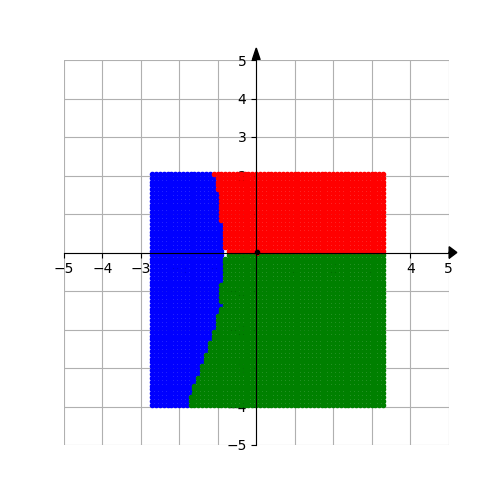}
              \caption{For Backtracking New Q-Newton's method. }

                      \end{subfigure}%
        \caption{Basins of attraction for the polynomial $P_3(z)=z^3-2z+2$. Blue: initial points $z_0$ for which the constructed sequence converges to $z_1^*$.  Green: similar points for the root $z_2^*$. Red: similar points for the root $z_3^*$. Black:  other points.} 
       \label{fig:Degree3}
\end{figure}

\underline{\bf For a polynomial of degree 4:} 

We consider the polynomial $P_4(z)=(z^2+1)(z-2.3)(z+2.3)$, which has 4 roots: $z_1^*=2.3$, $z_2^*=-2.3$, $z_3^*=i$ and $z_4=-i$. When applying the standard Newton's method to this polynomial, then the basins of attraction are fractal. Moreover, there are sets of positive Lebesgue measure where Newton's method applied to an initial point in these sets will not converge to any of the roots. 

In contrast, since this polynomial satisfies the conditions to apply the results in \cite{truong-etal}, Backtracking New Q-Newton's method applied to  a random initial point $z_0$ will always converge to one of the 3 roots. Moreover, the basins of attraction do not seem to have a fractal structure.  

Basins of attraction for Backtracking Gradient Descent seem to be more regular than that for Newton's method, but less regular than that for Backtracking New Q-Newton's method.

\begin{figure}
\centering

	 \begin{subfigure}[b]{0.5\textwidth}
        
         \includegraphics[width=\linewidth ]{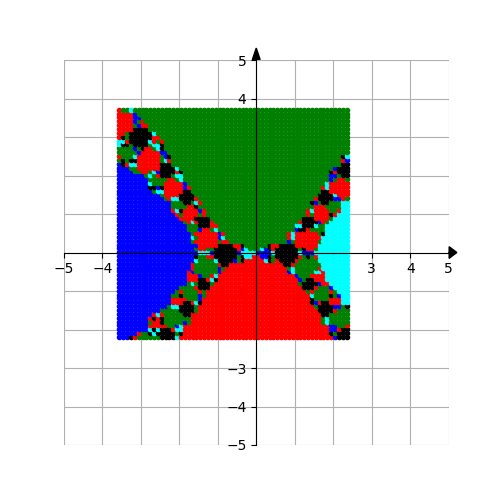}
              \caption{For Newton's method.} 
                      \end{subfigure}%

        \begin{subfigure}[b]{0.5\textwidth}
        
         \includegraphics[width=\linewidth ]{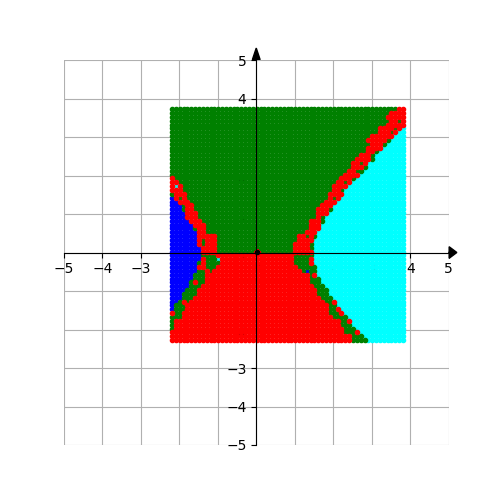}
              \caption{For Backtracking Gradient Descent.} 
                      \end{subfigure}%
        \begin{subfigure}[b]{0.5\textwidth}
              
               \includegraphics[width=\linewidth ]{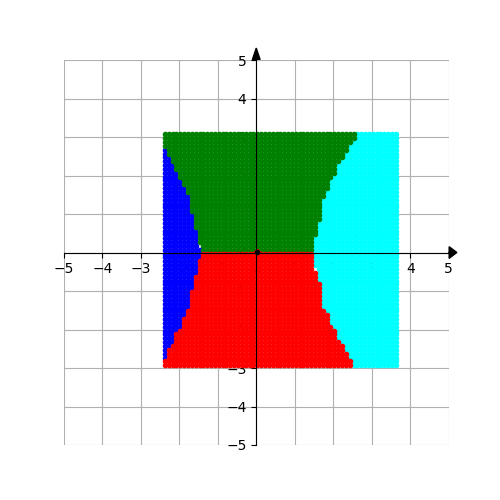}
              \caption{For Backtracking New Q-Newton's method. }

                      \end{subfigure}%
        \caption{Basins of attraction for the polynomial $P_4(z)=(z^2+1)(z-2.3)(z+2.3)$. Blue: initial points $z_0$ for which the constructed sequence converges to $z_1^*$.  Cyan: similar points for the root $z_2^*$. Green: similar points for the root $z_3^*$.Red: similar points for the root $z_4^*$.  Black:  other points. } 
       \label{fig:Degree4}
\end{figure}

\underline{\bf For a polynomial of degree 5:} 

We consider the polynomial $P_5(z)=z^5-3iz^3-(5+2i)z^2+3z+1$, which has 5 roots: $z_1^*\sim -1.28992-1.87357i$, $z_2^*=-0.824853+1.17353i$, $z_3^*=-0.23744+0.0134729i$,  $z_4=0.573868-0.276869i$ and $z_5=1.77834+0.963437i$.  This example is taken from \cite{NewtonFractal}. When applying the standard Newton's method to this polynomial, then the basins of attraction are fractal.  Basins of attraction for Backtracking Gradient Descent seem to be more regular than that for Newton's method, but less regular than that for Backtracking New Q-Newton's method. See Figure \ref{fig:Degree5} for details. 

\begin{figure}
\centering

	 \begin{subfigure}[b]{0.5\textwidth}
        
         \includegraphics[width=\linewidth ]{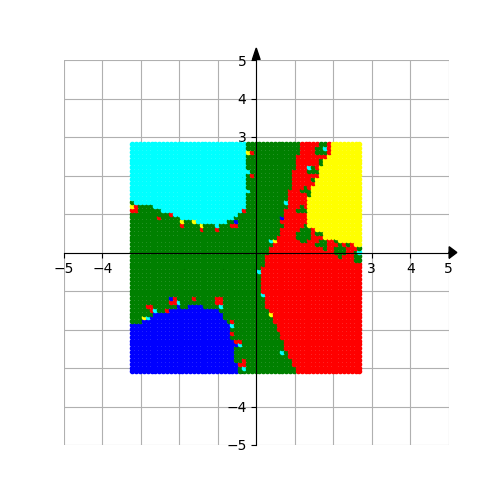}
              \caption{For Newton's method.} 
                      \end{subfigure}%

        \begin{subfigure}[b]{0.5\textwidth}
        
         \includegraphics[width=\linewidth ]{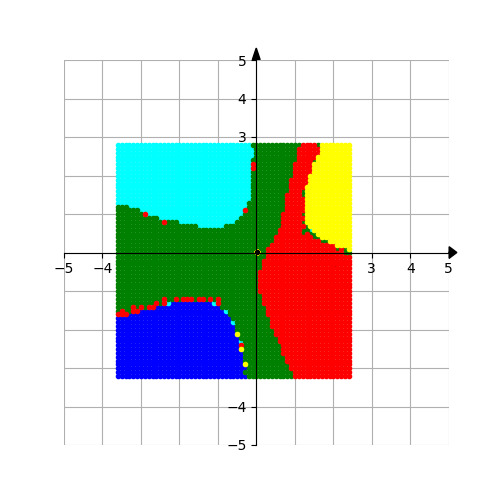}
              \caption{For Backtracking Gradient Descent.} 
                      \end{subfigure}%
        \begin{subfigure}[b]{0.5\textwidth}
              
               \includegraphics[width=\linewidth ]{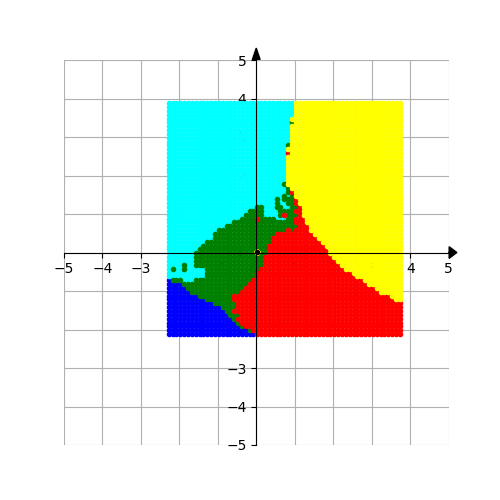}
              \caption{For Backtracking New Q-Newton's method. }

                      \end{subfigure}%
        \caption{Basins of attraction for the polynomial $P_5(z)=z^5-3iz^3-(5+2i)z^2+3z+1$. Blue: initial points $z_0$ for which the constructed sequence converges to $z_1^*$.  Cyan: similar points for the root $z_2^*$. Green: similar points for the root $z_3^*$.  Red: similar points for the root $z_4^*$. Yellow: similar points for the root $z_5^*$. Black:  other points.} 
       \label{fig:Degree5}
\end{figure}

\underline{\bf For a transcendental function:} 

We consider the polynomial $\phi _5(z)=(z^5-3iz^3-(5+2i)z^2+3z+1)e^z$, which has the same  5 roots as the polynomial $P_5(z)$ in the previous example: $z_1^*\sim -1.28992-1.87357i$, $z_2^*=-0.824853+1.17353i$, $z_3^*=-0.23744+0.0134729i$,  $z_4=0.573868-0.276869i$ and $z_5=1.77834+0.963437i$.  This example is taken from \cite{NewtonFractal}. When applying the standard Newton's method to this polynomial, then the basins of attraction are fractal.  Basins of attraction for Backtracking Gradient Descent seem to be more regular than that for Newton's method, but less regular than that for Backtracking New Q-Newton's method. See Figure \ref{fig:Transcendental} for details. 

For this experiment, unlike other experiments presented in this paper, we have to use the version $\widehat{w_k}=w_k/\max \{1,||w_k||\}$ in the Backtracking line search module of Backtracking New Q-Newton's method and of Backtracking Gradient Descent for to avoid overflow in the calculations which one observes when using the simpler version $\widehat{w_k}=w_k$. This can be explained by the fact that the function $\phi _5$ is transcendental and does not have compact sublevels, and indeed can be regarded as having a "zero" at infinity. Backtracking New Q-Newton's method, when using $\widehat{w_k}=w_k$ may  indeed be more easily to create sequences diverging to infinity. On the other hand, Backtracking New Q-Newton's method, using   $\widehat{w_k}=w_k$, may have more regular basins of attraction (where the points diverging to infinity are considered as the basin of the point at infinity). 

\begin{figure}
\centering

	 \begin{subfigure}[b]{0.5\textwidth}
        
         \includegraphics[width=\linewidth ]{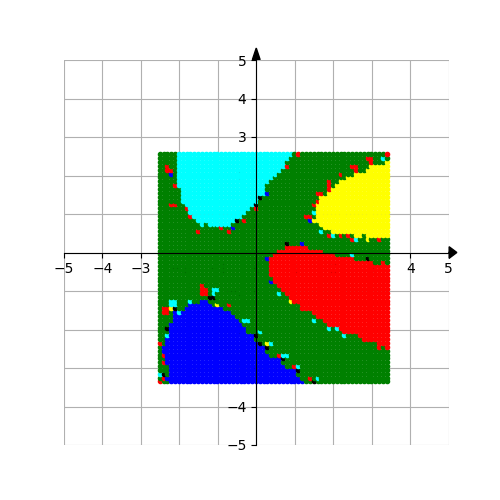}
              \caption{For Newton's method.} 
                      \end{subfigure}%

        \begin{subfigure}[b]{0.5\textwidth}
        
         \includegraphics[width=\linewidth ]{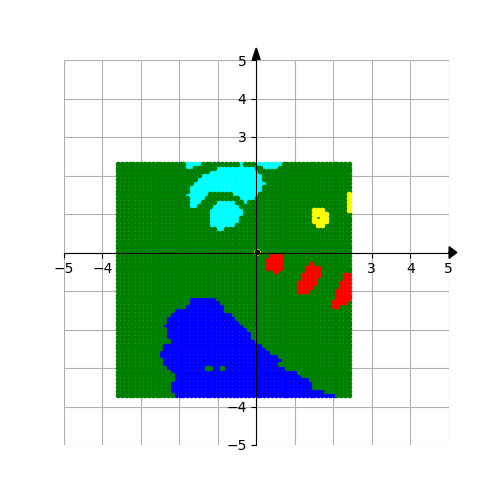}
              \caption{For Backtracking Gradient Descent.} 
                      \end{subfigure}%
        \begin{subfigure}[b]{0.5\textwidth}
              
               \includegraphics[width=\linewidth ]{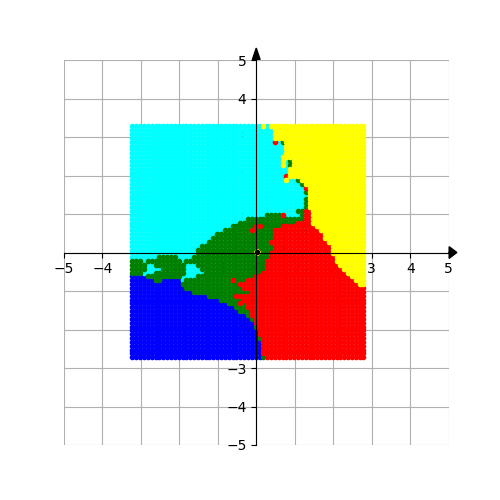}
              \caption{For Backtracking New Q-Newton's method. }

                      \end{subfigure}%
        \caption{Basins of attraction for the transcendental function $\phi _5(z)=(z^5-3iz^3-(5+2i)z^2+3z+1)e^z$. Blue: initial points $z_0$ for which the constructed sequence converges to $z_1^*$.  Cyan: similar points for the root $z_2^*$. Green: similar points for the root $z_3^*$.  Red: similar points for the root $z_4^*$. Yellow: similar points for the root $z_5^*$. Black:  other points.} 
       \label{fig:Transcendental}
\end{figure}

\subsection{Conclusions}

We have considered in this paper the problem of finding minima for a function $f:\mathbb{R}^m\rightarrow \mathbb{R}$, which can treat solving systems of equations in real variables.  We incorporate Backtracking line search into the algorithm New Q-Newton's method defined in \cite{truong-etal} and show that the new resulting algorithm Backtracking New Q-Newton's method has very good theoretical guarantees, on par or better than existing variants of Newton's method. We also propose a generalisation New Q-Newton's method Backtracking G which allows using general symmetric real matrices (and not just the Hessian matrix) and general orthornomal bases of $\mathbb{R}^m$ (and not just eigenvectors of the Hessian matrix). Some of these generalisations have the flavours of quasi-Newton's method. The implementation of the new methods is very direct and easy, and also allows high flexibility.  The algorithms run well on all parameters, even for those not covered by theoretical results currently proven. The experiments illustrate that the new algorithms indeed converge and avoid saddle points, as proven in theoretical results. 

{\bf Future theoretical considerations:} The dynamical aspects of these algorithms present many interesting and new phenomena, and pose challenges to up theory with what observed in experiments. It is worth pointing out that while each of these algorithms are {\bf deterministic} by definition, they are {\bf random} in practice (due to the fact that the search in Backtracking line search has this characteristic), and this helps to enhance their performance. While the results obtained  are already on par or better the more well known algorithms in the current literature), deeper theoretical properties as well as more refined and practical implementations await extensive work in the future. For example, can we rigorously explain the regular features of the basins of attraction of Backtracking New Q-Newton's method (see Figures \ref{fig:Degree2}, \ref{fig:Degree3} and \ref{fig:Degree4}). These figures suggest that this cannot be explained based solely on properties of Backtracking line search (since the basins of attraction for Backtracking Gradient Descent are not that regular), but also properties of the Hessian of the cost function must be taken into account.  

The following question is a first step to understanding more the dynamics of Backtracking New Q-Newton's method. 

{\bf Question 1.} Let $P(z)$ be an analytic function in 1 complex variable. Let's apply Backtracking New Q-Newton's method to the function $f(x,y)=\frac{1}{2}||P(x+iy)||^2$ as usual. a) What are the basins of attraction for zeros of $P(z)$? b) Is it true that the union of these basins of attraction a set of density 1? c)  Do they have fractal structures or are the basins nicely divided? 

Here, we use the following definition for density of a set $A\subset \mathbb{R}^2$. We let for $R>0$ the set $B(R)=\{(x,y)\in \mathbb{R}^2:~x^2+y^2< R\}$. We define $|A|$ to be the Lebesgue measure of $A$. The density of $A$ is defined as $\lim _{R\rightarrow \infty}|A\cap B(R)|/|B(R)|$ (if it exists). 

Note that if $P(z)$ is a generic analytic function, then by results in \cite{truong-etal}, the union of the basins of attractions at the roots of $P(z)$ is a set of full Lebesgue measure, and hence has density 1. Therefore, part b of Question has an affirmative answer for generic analytic functions. For non-generic analytic functions, even for polynomials of degree $\leq 2$, all the 3 parts of Question 1 are totally unknown.  

For the standard Newton's method, a result of Arthur Cayley showed that for a quadratic polynomial $az^2+bz+c$ (with no iterated roots), the picture is very nice: the basin of attraction for each of the two roots is a half plane.  For higher degrees, it is understood from classical results by Fatou and Julia that the basins can have fractal structures \cite{NewtonFractal}, see the next pages for some fractal pictures of Newton's method compared to the more regular shapes of Backtracking New Q-Newton's method. Moreover, it is known from the work \cite{mcmullen} that Newton's method (or more generally iterative rational root-finding algorithms) applied to generic polynomials in general will have exceptional sets of positive Lebesgue measures where initial points chosen in these exceptional sets will not converge to any root of the polynomial. Hence, for these algorithms, it is not meaningful to state Question above.  Figure \ref{fig:Degree2} shows that for degree 2 polynomial,  the basin of Backtracking New Q-Newton's method are very similar to that of the standard Newton's method. In contrast, for polynomials of higher degrees and for transcendental functions,  the basins of attraction of Backtracking New Q-Newton's method seem very regular, and in particular do not seem to have the fractal structure observed in Newton's method.  

{\bf Future experimental/implementational considerations:} Some considerations can help reduce the computational cost for the new algorithms. First, one can use the Two-way Backtracking line search version in \cite{truong-nguyen1}\cite{truong-nguyen2}, which was shown both heuristically and experimentally to reduce the number of iterates and time. Second, one can study simpler versions of Backtracking New Q-Newton's method, for example the version Simplified Backtracking New Q-Newton's method, where just a few eigenvalues and eigenvectors of the Hessian are used. This can be combined with numerical methods, such as Lanczos', to compute the first few eigenvalues and eigenvectors.  

On finding roots of an analytic function in 1 complex variable, one can use two procedures. Let $f:\mathbb{C}\rightarrow \mathbb{C}$ be an analytic function, and $N$ a positive integer. Assume for simplicity that one wants to find  N roots contained in an open bounded connected set  $B\subset \{z\in \mathbb{C}:|z|<R\}$ whose boundary $\partial B$ is a Jordan curve.  In the first procedure, one chooses random initial points $z_0$ inside $B$,  and runs  Backtracking New Q-Newton's method, with the additional criterion that we will stop the algorithm when the constructed sequence starts to leave the circle $\{|z|=R+1\}$ or when the number of roots found reaches $N$. In the second procedure, one chooses random initial points $z_0$ on the boundary $\partial B$,  and runs  Backtracking New Q-Newton's method, with the additional criterion that we will stop the algorithm when the constructed sequence starts to leave the circle $\{|z|=R+1\}$ or when the number of roots found reaches $N$. The second procedure may find N roots quicker than the first procedure. 

We recall a related result in \cite{hubbard-etal}, which for a positive integer $d$, constructed a finite set $A_d\subset \mathbb{C}$ so that for a given polynomial $P_d$ and a root $z^*$ of $P(z)$, Newton's method with at least one initial point  in $A_d$ will converge precisely to $z^*$. This result relies on very deep properties of the roots of a polynomial of a fixed degree and Newton's method, but maybe too rigid (and can be slow, for example when one wants to find only a few roots, say 5, and not all the roots).  Our procedure is to be used for more general functions (and systems of equations). Since a general analytic function can have infinitely many roots, unlike the case of polynomials we need choose in advance the region we want to find the roots and the number of roots to be found. Given a region, one can use Cauchy's integral formulas to obtain a precise (or estimate) value of how many roots it has inside the domain.  

\section{Appendix: How Backtracking New Q-Newton's method finds roots of a real function in 1-dimension $F:\mathbb{R}\rightarrow \mathbb{R}$} In this section we illustrate how Backtracking New Q-Newton's method works in finding roots of a function $F:\mathbb{R}\rightarrow \mathbb{R}$. 

We choose randomly two real numbers $\delta _0, \delta _1$. We fix real  numbers $0<\tau $ and $0<\gamma _0<1$. We define $\kappa =|\delta _0-\delta _1|/2$. We define a non-negative function $f:\mathbb{R}\rightarrow \mathbb{R}$ by the formula: $f(x)=|F(x)|^2/2$. It is easy to check that $f'(x)=F(x)F'(x)$ and $f"(x)=|F'(x)|^2+F(x)F"(x)$. Hence, a critical point of $f$ is exactly a zero of $F$ or a critical point of $F$. Moreover, a non-degenerate critical point $x^*$ of $f$ is a local minimum iff $F(x^*)F"(x^*)>0$, and is a local maximum iff $F(x^*)F"(x^*)<0$. 

Choose randomly an initial point $z_0\in \mathbb{R}$, then construct consecutively a sequence $z_k$ as follows: 

1) If $f'(z_k)=0$ then STOP. If not, go to 2.

2) If $|f"(z_k)+\delta _0|f'(z_k)|^{\tau}|\geq \kappa |f'(z_k)|^{\tau}$ then define $A_k:=f"(z_k)+\delta _0|f'(z_k)|^{\tau}$. Otherwise, define $A_k:=f"(z_k)+\delta _1|f'(z_k)|^{\tau}$. 

3) Define 

\begin{eqnarray*}
w_k&:=&\frac{f'(z_k)}{|A_k|},\\
\widehat{w_k}&:=&\frac{w_k}{\max \{1,|w_k|\}}.
\end{eqnarray*}

Note that 
\begin{eqnarray*}
w_k.f'(z_k)=\frac{|f'(z_k)|^2}{|A_k|}>0.
\end{eqnarray*}

Remark: if $f$ has compact sublevels, then we can choose $\widehat{w_k}=w_k$. 

4) Define $\gamma :=\gamma _0$. While 
\begin{eqnarray*}
f(z_k-\gamma \widehat{w_k})-f(z_k)+\gamma \widehat{w_k}f'(z_k)/(3|A_k|)>0,
\end{eqnarray*}
do $\gamma :=\gamma /3$. 

5) Define $z_{k+1}:=z_k-\gamma \widehat{w_k}$. 

6) Increase $k$ to $k+1$ and repeat the above steps.

\end{document}